\newtheorem{theorem}{Theorem}[section]
\newtheorem{lemma}[theorem]{Lemma}
\newtheorem{proposition}[theorem]{Proposition}
\theoremstyle{definition}
\newtheorem{definition}[theorem]{Definition}
\newtheorem{example}[theorem]{Example}
\newtheorem{remark}[theorem]{Remark}
\theoremstyle{remark}
\numberwithin{equation}{section}
\begin{document}
	
	\title{On integral representation of radial operators}
 
	\author[Bishal Bhunia]{Bishal Bhunia $^\dag$}
	\address{
		Bishal Bhunia:
		\endgraf
		Department of Mathematics and Statistics
		\endgraf
		Indian Institute of Science Education and Research Kolkata
		\endgraf
		Mohanpur-741246 
		\endgraf
		India
	}
	\email{bb22rs011@iiserkol.ac.in}

	\subjclass[2020]{32A36, 47A13, 42B05}
	\keywords{Reinhardt domain, The Bergman space, The Hardy space, The Dirichlet space, Radial operator, Integral Operator, Periodic distribution, Spectrum, von Neumann algebra}
 \thanks{$^\dag$ This work is supported by University Grant Commission Fellowship (NTA reference no. 211610189115) availed at Indian Institute of Science Education and Research Kolkata}
	\date{}
	
	\begin{abstract}
  In this article, we characterize the radial operators on weighted Bergman spaces of Reinhardt domains in $\mathbb{C}^n$, the Dirichlet and the Hardy spaces of the open unit disk $\mathbb{D}$, in terms of integral representations. We also investigate normality, compactness, spectrum and numerical ranges of these operators. Further, utilizing the theory of radial operators, we produce examples of von Neumann algebras of analytic functions on any Reinhardt domain.
	\end{abstract}
	\maketitle
	
	\section{Introduction}
The notion of radial operator on the Bergman space of the unit disc in $\mathbb{C}$ was first introduced by N. Zorboska in \cite{zorboska2003berezin}. Subsequently, this was investigated in several domains, such as unit polydisc $\mathbb D^n$, unit ball $\mathbb B^n$ in $\mathbb C^n$ \cite{bauer2014eigenvalue, li2019radial}. In this article, we study them in the general framework of the Reinhardt domain. Let $H$ be a Hilbert space of holomorphic functions on a Reinhardt domain $\Omega$ in $\mathbb C^n$, that is, the entrywise product 
$$\lambda z := (\lambda_1 z_1, \cdots, \lambda_n z_n)\in \Omega \mbox{~for~} z\in\Omega \mbox{~and~}\lambda = (\lambda_1, \cdots, \lambda_n) \in \mathbb{T}^n,$$ 
where $\mathbb{T}^n:=\{(\lambda_1, \cdots, \lambda_n): |\lambda_j|=1 , \forall 1\leq j \leq n\}$. We consider an operator $V_\lambda: H \rightarrow H$ defined by
$$
\left(V_\lambda f\right)(z)=f(\lambda z), \forall f \in H , z\in\Omega.
$$
A bounded linear operator $R$ on $H$ is said to be \textit{radial} if each $V_\lambda$ is bounded on $H$ and $R$ commutes with $V_\lambda$ for all $\lambda \in \mathbb{T}^n$, that is, $RV_\lambda=V_\lambda R$ for all $\lambda \in \mathbb{T}^n$.
Let $\phi$ be a holomorphic function from $\Omega$ to itself. The composition operator on $H$ induced by $\phi$, is denoted by $C_\phi$, and is defined as $C_\phi(f) = f \circ \phi$, for all $f \in H$. Then a radial operator on $H$ is can be viewed as a commutant of all composition operators of the form $C_\phi$, where $\phi$ is a linear map on $\mathbb{C}^n$ such that the the matrix representation of $\phi$ with respect to standard basis of $\mathbb{C}^n$ is unitary and diagonal.

Let $\omega$ be an admissible, multi-radial weight on $\Omega$ \cite[page - 5]{chakrabarti2024projections} and $d V$ the Lebesgue (volume) measure on $\Omega$. Let $A^2(\Omega, \omega)$ be the weighted Bergman space (with respect to the weight $\omega$) on $\Omega$, i.e., the space of holomorphic functions in $L^2(\Omega, \omega d V)$. Let $F^2(\mathbb{C})$, $H^2(\mathbb{D})$ and $\mathscr{D}(\mathbb{D})$ denote the Fock space of $\mathbb{C}$, the Hardy and the Dirichlet spaces of the open unit disc $\mathbb{D}$ respectively.

In \cite{zhu2015singular}, Zhu posed the following problem on the Fock space $F^2(\mathbb{C})$ of characterizing all $\phi \in F^2(\mathbb{C})$ for which the integral operator
$$
S_\phi f(z)=\int_{\mathbb{C}} f(w) e^{z \bar{w}} \phi(z-\bar{w}) d A(w)
$$
is bounded on $F^2(\mathbb{C}),$ where $dA$ denotes the Lebesgue (area) measure on $\mathbb{C}$.
In \cite{cao2020boundedness}, Cao et al. solved this question for the Fock space $F^2\left(\mathbb{C}^n\right)$. Motivated by this, Pinlodi and Naidu in \cite{mohan2024integral} solved a similar problem for the (unweighted) Bergman space $A^2(\mathbb{D})$ as follows: Characterize all holomorphic functions $g$ on $\mathbb{D}$ so that the integral operator

\begin{equation*}
    \left(S_g f\right)(z)=\int_{\mathbb{D}} f(w) g(z \bar{w}) d A(w)
\end{equation*} 
is bounded on $A^2(\mathbb{D})$. It turns out that every radial operator on $A^2(\mathbb{D})$ is of the form $S_g$ for some holomorphic functions $g$ on $\mathbb{D}$. Inspired by these advancements, we prove that the bounded radial operators $R$ on $A^2(\Omega, \omega)$, $H^2(\mathbb{D})$ and $\mathscr{D}(\mathbb{D})$ will have the following integral representations, respectively:

$${Rf}(z)=\int_{\Omega} f(w) g(z \bar{w}) \omega d V(w),\mbox{~on~} A^2(\Omega, \omega),$$  

$${R} f(z)=\hat{u}(0) f(0)+z \int_{\mathbb{D}} f^{\prime}(w) g^{\prime}(z \bar{w}) \log \frac{1}{|z|^2} d A(w), \mbox{~on~} H^2(\mathbb{D}),$$

$$R f(z)=\hat{u}(0) f(0)+z \int_{\mathbb{D}} f'(w) g'(z \bar{w}) d A(w), \mbox{~on~} \mathscr{D}(\mathbb{D}), $$
where $\hat{u}(0)$ denotes the $0-$th Fourier coefficient of the periodic distribution $u$, which induces the holomorphic function $g$.
In a recent paper \cite{Zhu2024}, Ma and Zhu gave the first example of $C^*$-algebra consisting of entire functions on $\mathbb{C},$ and posed the challenge of coming up with more examples of $C^*$-algebra of analytic functions on other complex domains. An example of a von Neumann algebra of analytic functions on $\mathbb{B}^n$ was provided by Pinlodi and Naidu in \cite{venkuvonneumann}. By applying the ideas presented in section \ref{SEC:p=3}, we produce a large class of examples of von Neumann algebras on all Reinhardt domains (see section \ref{SEC: p=4}), answering the challenge posed by Ma and Zhu in \cite{Zhu2024} in a more general setting.  We also state several operator theoretic properties of the radial operators such as normality, compactness, spectrum, numerical range and characterization of common reducing subspaces. Lastly, in section \ref{SEC: p=5}, using the integral representation of radial operators on the Dirichlet space of $\mathbb{D}$ and the Hardy space of $\mathbb{D}$, we provide the corresponding examples of von Neumann algebras of analytic function on $\mathbb{D}$ and compare them. In the next section, we present all basic definitions and concepts that are required to substantiate our article.

	
\section{Preliminaries}\label{SEC:preliminaries}
	
	Let $H$ be a separable Hilbert space. $B(H)$ will denote the set of all bounded linear operators on $H$. Let $\mathbb{Z}^n$ be the collection of all $n$-tuples of integers and $\Lambda$ a subset of $\mathbb{Z}^n$. Let $l^2:=l^2(\Lambda)$ be the space of all sequences $(a_\alpha)_{\alpha\in \Lambda}$ such that
$$
\sum_{\alpha\in \Lambda}\left|a_\alpha\right|^2<\infty
$$
and $l^{\infty}:=l^{\infty}(\Lambda)$ be the space of all bounded sequences.\\\\

\begin{definition}(Periodic functions)\cite[Definition 3.1.4]{ruzhansky2009pseudo} 
A function $f: \mathbb{R}^n \rightarrow \mathbb{C}$ is said to be 1-periodic if $f(x+k)=f(x)$ for every $x \in \mathbb{R}^n$ and $k \in \mathbb{Z}^n$.
\end{definition}
Let $\mathbb{T}^n:=\mathbb{R}^n / \mathbb{Z}^n=\{x+\mathbb{Z}^n: x \in \mathbb{R}^n\}$. Now onwards, we shall consider all 1-periodic functions on $\mathbb{R}^n$ as functions defined on $\mathbb{T}^n$. Let $C^m(\mathbb{T}^n)$ denote the space of all 1-periodic $m$ times continuously differentiable functions on $\mathbb{T}^n$ and the set of all test functions is given by
$$
C^{\infty}(\mathbb{T}^n):=\bigcap_{m \in \mathbb{Z}_{+}} C^m(\mathbb{T}^n)
$$


\begin{definition} \cite[Definition 3.1.6]{ruzhansky2009pseudo}
The space of \textit{rapidly decaying sequences} indexed by $\Lambda \subseteq \mathbb{Z}^n$ is denoted by $\mathcal{S}(\Lambda)$. A complex-valued sequences $a=\left(a_\alpha\right)_{\alpha \in \Lambda} \in \mathcal{S}(\Lambda)$ if for any $M < \infty$, there exists a constant $C_{a,M}$ such that 
$$
|a_{\alpha}| \leq C_{a,M} (1 + \|\alpha\|)^{-{M/2}},
$$
holds for all $\alpha \in \mathbb{Z}^n$, where $\|.\|$ denotes the usual Euclidean norm.
\end{definition}
\begin{definition}\cite[p. 300]{ruzhansky2009pseudo}
A complex sequence $\left(u_\alpha\right)_{\alpha \in \Lambda}$ is said to be of \textit{slow growth} if there exist constants $N < \infty$ and $C_{u,N}$ such that
$$
\left|a_\alpha\right| \leq C_{a,N}(1+\|\alpha\|)^{N/2}, \forall \alpha \in \Lambda .
$$

The set of all sequences of slow growth is denoted by $\mathcal{S}'(\Lambda).$

\end{definition}

\begin{definition} (Periodic distribution space $\mathcal{D}^{\prime}(\mathbb{T}^n)$ ). \cite[Definition 3.1.25]{ruzhansky2009pseudo} 
A continuous linear functional $T: C^{\infty}(\mathbb{T}^n) \rightarrow \mathbb{C}$ is called periodic distribution. The set of all periodic distributions is denoted by $\mathcal{D}^{\prime}(\mathbb{T}^n)$. For $u \in \mathcal{D}^{\prime}(\mathbb{T}^n)$ and $\phi \in C^{\infty}(\mathbb{T}^n)$, we denote the action of $u$ on $\phi$ by
$$
u(\phi)=\langle u, \phi\rangle .
$$

For any $\psi \in C^{\infty}(\mathbb{T}^n)$,
$$
\phi \mapsto \int_{\mathbb{T}^n} \phi(x) \psi(x) d x
$$
is a periodic distribution, which gives the embedding $\psi \in C^{\infty}(\mathbb{T}^n) \subset \mathcal{D}^{\prime}(\mathbb{T}^n)$.
The topology of $\mathcal{D}^{\prime}(\mathbb{T}^n)$ is the weak*-topology. We refer the reader to \cite{ruzhansky2009pseudo} for further details.
\end{definition}

\begin{definition}(Space $L^2(\mathbb{T}^n)$ ). \cite[Definition 3.1.11]{ruzhansky2009pseudo} The space $L^2(\mathbb{T}^n)$ is a Hilbert space with the inner product
$$
\langle u, v\rangle_{L^2(\mathbb{T}^n)}:=\int_{\mathbb{T}^n} u(x) \overline{v(x)} d x .
$$
\end{definition}

\begin{definition}(Periodic Fourier transform). \cite[Definition 3.1.8]{ruzhansky2009pseudo} Let
$$
F=(f \mapsto \widehat{f}): C^{\infty}(\mathbb{T}^n) \rightarrow \mathcal{S}(\mathbb{Z}^n)
$$
be the Fourier transform defined by
$$
\widehat{f}(\alpha):=\int_{\mathbb{T}^n} e^{-i \alpha \cdot \theta} f(\theta) d \theta, \quad \alpha \in \mathbb{Z}^n.
$$

Then $F$ is a bijection and its inverse $F^{-1}=(a \mapsto \check{a}): \mathcal{S}(\mathbb{Z}^n) \rightarrow C^{\infty}(\mathbb{T}^n)$ is given by
$$
\check{a}(x):=\sum_{\alpha \in \mathbb{Z}^n} e^{i \alpha \cdot \theta} a(\alpha) .
$$

It is well-known that the Fourier transform $F: L^2(\mathbb{T}^n) \rightarrow l^2(\mathbb{Z}^n)$ is a unitary operator. Let
$L_\Lambda^2\left(\mathbb{T}^n\right)$ denote the closed subspace of $L^2(\mathbb{T}^n)$ consisting of functions having Fourier coeffcients zero on $\mathbb{Z}^n  \setminus \Lambda.$ The restriction of $F$ on any of the closed subspace of $L^2(\mathbb{T}^n)$ will also be denoted by $F$.
\end{definition}
\begin{definition} (Fourier transform on $\mathcal{D}^{\prime}(\mathbb{T}^n)$ ). \cite[Definition 3.1.27]{ruzhansky2009pseudo} The Fourier transform is extended uniquely to the mapping $F: \mathcal{D}^{\prime}(\mathbb{T}^n) \rightarrow \mathcal{S}^{\prime}(\mathbb{Z}^n)$ by the formula
$$
\langle F u, \phi\rangle:=\left\langle u, \iota \circ F^{-1} \phi\right\rangle,
$$
where $u \in \mathcal{D}^{\prime}(\mathbb{T}^n), \phi \in \mathcal{S}(\mathbb{Z}^n)$, and $\iota$ is defined by $(\iota \circ \psi)(x)=\psi(-x)$.
\end{definition}
\begin{definition} \cite[Definition 3.186]{iorio2001fourier} Let $f, g \in \mathcal{D}^{\prime}(\mathbb{T}^n)$. The convolution $f * g$ is defined by the formula $\langle f * g, \phi\rangle=\langle f,(\iota \circ g) * \phi\rangle, \quad \phi \in C^{\infty}(\mathbb{T}^n)$, where $\iota \circ g$ is as defined in Definition 2.7.
\end{definition}
\begin{theorem} \cite[Corollary 3.188]{iorio2001fourier} Let $f, g \in \mathcal{D}^{\prime}(\mathbb{T}^n)$. Then $f * g \in \mathcal{D}^{\prime}(\mathbb{T}^n)$ and $\widehat{(f * g)}(\alpha)=\widehat{f}(\alpha) \widehat{g}(\alpha)$ for all $\alpha \in \mathbb{Z}^n$.
\end{theorem}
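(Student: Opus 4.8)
The plan is to argue directly from the two notions already in place: the duality definition $\langle f*g,\phi\rangle=\langle f,(\iota\circ g)*\phi\rangle$ of the convolution and the definition of the periodic Fourier transform on $\mathcal{D}'(\mathbb{T}^n)$. Accordingly the proof splits into two parts: (i) show that the displayed formula really does define a \emph{continuous} linear functional on $C^\infty(\mathbb{T}^n)$, so that $f*g\in\mathcal{D}'(\mathbb{T}^n)$; and (ii) compute its Fourier coefficients. For (i) the crux is that convolving a periodic distribution with a test function again yields a test function, continuously.

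More precisely, for $T\in\mathcal{D}'(\mathbb{T}^n)$ and $\psi\in C^\infty(\mathbb{T}^n)$ the convolution is the function $(T*\psi)(x)=\langle T_y,\psi(x-y)\rangle$. I would first check that this lies in $C^\infty(\mathbb{T}^n)$: the $C^\infty(\mathbb{T}^n)$-valued map $x\mapsto\psi(x-\cdot)$ has difference quotients that converge in the Fréchet topology of $C^\infty(\mathbb{T}^n)$ (by Taylor's theorem and the uniform continuity of the partials on the compact torus), so one may differentiate under $\langle T,\cdot\rangle$ and obtain $\partial^\beta(T*\psi)(x)=\langle T_y,(\partial^\beta\psi)(x-y)\rangle$, which is again continuous in $x$. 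Next, continuity of $T$ on the Fréchet space $C^\infty(\mathbb{T}^n)$ gives an $m\in\mathbb{Z}_+$ and a $C>0$ with $|\langle T,\eta\rangle|\le C\max_{|\beta|\le m}\|\partial^\beta\eta\|_\infty$ for all $\eta$; applying this with $\eta=(\partial^\beta\psi)(x-\cdot)$ yields $\max_{|\beta|\le k}\|\partial^\beta(T*\psi)\|_\infty\le C\max_{|\beta|\le m+k}\|\partial^\beta\psi\|_\infty$, i.e. $\psi\mapsto T*\psi$ is continuous from $C^\infty(\mathbb{T}^n)$ into itself. Taking $T=\iota\circ g$ — which belongs to $\mathcal{D}'(\mathbb{T}^n)$ with the same seminorm estimate, since $\eta\mapsto\iota\circ\eta$ preserves each seminorm — and composing with the continuous functional $f$ shows that $\phi\mapsto\langle f,(\iota\circ g)*\phi\rangle$ is continuous on $C^\infty(\mathbb{T}^n)$; hence $f*g\in\mathcal{D}'(\mathbb{T}^n)$.

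For (ii) I would first record the elementary identity $\widehat{u}(\alpha)=\langle u,e_{-\alpha}\rangle$ for $u\in\mathcal{D}'(\mathbb{T}^n)$, where $e_{-\alpha}(x):=e^{-i\alpha\cdot x}$: pairing $Fu$ with the finitely supported sequence equal to $1$ at $\alpha$ and $0$ elsewhere isolates $\widehat{u}(\alpha)$ on the left, while on the right $\iota\circ F^{-1}$ of that sequence is exactly $e_{-\alpha}$. Then, using the convolution formula from part (i),
$$\bigl((\iota\circ g)*e_{-\alpha}\bigr)(x)=\bigl\langle(\iota\circ g)_y,\,e^{-i\alpha\cdot(x-y)}\bigr\rangle=\bigl\langle g_y,\,e^{-i\alpha\cdot(x+y)}\bigr\rangle=e^{-i\alpha\cdot x}\,\langle g_y,\,e^{-i\alpha\cdot y}\rangle=\widehat{g}(\alpha)\,e_{-\alpha}(x),$$
so that $\widehat{(f*g)}(\alpha)=\langle f*g,e_{-\alpha}\rangle=\langle f,(\iota\circ g)*e_{-\alpha}\rangle=\widehat{g}(\alpha)\,\langle f,e_{-\alpha}\rangle=\widehat{f}(\alpha)\,\widehat{g}(\alpha)$, which is the assertion. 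The main obstacle is the first part: one has to be sure that $(\iota\circ g)*\phi$ is genuinely a smooth function and that $\phi\mapsto(\iota\circ g)*\phi$ is continuous between these Fréchet spaces, for only then does the duality formula produce an honest distribution; the Fourier identity is then a one-line computation. If one prefers to sidestep the smoothness discussion, an alternative is to invoke the structure theorem that every periodic distribution is a finite-order distributional derivative of a continuous $1$-periodic function and reduce to the classical Young-type estimate, but the self-contained duality argument above seems cleanest given what has already been set up.
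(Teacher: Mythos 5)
Your proof is correct. Note, however, that the paper does not prove this statement at all: it is quoted verbatim as \cite[Corollary 3.188]{iorio2001fourier} and used as a black box, so there is no in-paper argument to compare against. Your self-contained duality argument is the standard one and is sound on both counts: the seminorm estimate $\max_{|\beta|\le k}\|\partial^\beta(T*\psi)\|_\infty\le C\max_{|\beta|\le m+k}\|\partial^\beta\psi\|_\infty$ correctly establishes that $\phi\mapsto(\iota\circ g)*\phi$ maps $C^\infty(\mathbb{T}^n)$ continuously into itself (which is exactly what is needed for $\langle f,(\iota\circ g)*\phi\rangle$ to define a periodic distribution), and the identity $\widehat{u}(\alpha)=\langle u,e_{-\alpha}\rangle$ together with the computation $(\iota\circ g)*e_{-\alpha}=\widehat{g}(\alpha)\,e_{-\alpha}$ gives the Fourier multiplication formula in one line. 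The only cosmetic remark is that you should state explicitly at the outset that $(T*\psi)(x):=\langle T_y,\psi(x-y)\rangle$ is the definition of distribution--test-function convolution being used, since the paper's Definition 2.8 presupposes it without spelling it out.
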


Now, let us recall some well-known definitions.

\begin{definition}
Let $T \in B(H)$. The spectrum of $T$, denoted by $\sigma(T)$, consists of all $z \in \mathbb{C}$ such that $(T-z I)^{-1} \notin B(H)$.
A complex number $\lambda$ is called an eigenvalue of $T$ if there exists $v \in H \backslash\{0\}$ satisfying $T v=\lambda v$. The set of all eigenvalues of $T$ is said to be the point spectrum of $T$ and is denoted by $\sigma_p(T)$. The number $\lambda$ is said to be an approximate eigenvalue of $T$ if there exists a sequence $\left(v_n\right)$ of unit vectors in $H$ such that $(T-\lambda I) v_n \rightarrow 0$ as $n \rightarrow \infty$. The collection of approximate eigenvalues of $T$ is called the approximate point spectrum of $T$ and it is denoted by $\sigma_{app}(T)$. Clearly, $\sigma_p(T) \subseteq \sigma_{app}(T) \subseteq \sigma(T)$. 
Let $a \in l^{\infty}$. Define the multiplication operator $M_a: l^2 \rightarrow l^2$ by
$$
\left(M_a b\right)(m)=a(m) b(m) \text { for all } b \in l^2 \text { and } m \in \Lambda .
$$

It is known that $M_a$ is bounded on $l^2$ if and only if $a \in l^{\infty}$. Moreover, $\left\|M_a\right\|_{l^2 \rightarrow l^2}=\|a\|_{l^{\infty}}$. If $\mathcal{M}\left(l^2\right)=$ $\left\{M_a: a \in l^{\infty}\right\}$, then the map $I: l^{\infty} \rightarrow \mathcal{M}\left(l^2\right)$ defined by $I(a)=M_a$ is a $*$-isometric isomorphism.

\end{definition}

\begin{theorem}\cite{bhatia2009notes} \label{multiplication}
\begin{enumerate}
\item $M_a^*=M_{\bar{a}}$, where $\bar{a}(m)=\overline{a(m)}, \quad m \in \Lambda$.

\item $M_{a_1} M_{a_2}=M_{a_1 a_2}=M_{a_2 a_1}=M_{a_2} M_{a_1}$.



\item The collection $\mathcal{M}\left(l^2\right)$ is a maximal commutative $C^*$-subalgebra of $B\left(l^2\right)$.

\item $\sigma\left(M_a\right)=\sigma_{app}\left(M_a\right)=\overline{\operatorname{range}(a)}$.

\item $\lambda \in \sigma_p\left(M_a\right)$ if and only if $a(m)=\lambda$ for some $m \in \Lambda$.

\item $M_a$ is compact if and only if $(a(m))_{m \in \Lambda}$ vanishes at infinity.

\item $M_a$ is of finite rank if and only if $a(m)=0$ for all but finitely many $m \in \Lambda$.
\end{enumerate}
\end{theorem}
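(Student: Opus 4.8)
The plan is to establish all seven assertions directly from the defining formula $(M_a b)(m) = a(m)b(m)$, using throughout the canonical orthonormal basis $\{e_m : m \in \Lambda\}$ of $l^2(\Lambda)$, where $e_m(k) = 1$ for $k = m$ and $e_m(k)=0$ otherwise, so that $M_a e_m = a(m) e_m$. Item (1) follows from the computation $\langle M_a b, c\rangle = \sum_{m \in \Lambda} a(m) b(m) \overline{c(m)} = \sum_{m\in\Lambda} b(m)\overline{\overline{a(m)}c(m)} = \langle b, M_{\bar a} c\rangle$, valid for all $b, c \in l^2$; item (2) is immediate from the commutativity and associativity of pointwise multiplication of sequences.

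For item (3): by (1) and (2), $\mathcal{M}(l^2)$ is a commutative $*$-subalgebra of $B(l^2)$, and it is norm-closed --- hence a $C^*$-algebra --- because the correspondence $a\mapsto M_a$ is an isometry from $l^{\infty}$ onto $\mathcal{M}(l^2)$, as recorded just before the statement. For maximality it suffices to show that the commutant of $\mathcal{M}(l^2)$ in $B(l^2)$ is contained in $\mathcal{M}(l^2)$. So let $T \in B(l^2)$ commute with every $M_a$. Choosing $a$ to be the indicator sequence of a single point $m$ shows that $T$ commutes with the orthogonal projection $f\mapsto \langle f, e_m\rangle e_m$, whence $Te_m \in \mathbb{C}e_m$; writing $Te_m = c(m)e_m$ defines a sequence $c$, and $T$ agrees with $M_c$ on the dense linear span of $\{e_m\}$, so $T = M_c$, and then $\|c\|_{l^{\infty}} = \|T\| < \infty$, i.e. $c \in l^{\infty}$ and $T \in \mathcal{M}(l^2)$.

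Items (4) and (5) are handled by elementary spectral arguments. If $\lambda \notin \overline{\operatorname{range}(a)}$, then $\delta := \inf_{m\in\Lambda}|a(m) - \lambda| > 0$, so the sequence $m\mapsto (a(m)-\lambda)^{-1}$ lies in $l^{\infty}$ and, by (2), $M_{(a-\lambda)^{-1}}$ is a two-sided inverse of $M_a - \lambda I = M_{a-\lambda}$; thus $\sigma(M_a)\subseteq\overline{\operatorname{range}(a)}$. Conversely, if $\lambda\in\overline{\operatorname{range}(a)}$, choose $m_k\in\Lambda$ with $a(m_k)\to\lambda$; the unit vectors $e_{m_k}$ satisfy $\|(M_a - \lambda I)e_{m_k}\| = |a(m_k)-\lambda|\to 0$, so $\lambda\in\sigma_{app}(M_a)$. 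Recalling from the Definition that $\sigma_p\subseteq\sigma_{app}\subseteq\sigma$, the chain $\sigma(M_a)\subseteq\overline{\operatorname{range}(a)}\subseteq\sigma_{app}(M_a)\subseteq\sigma(M_a)$ gives (4). For (5), if $M_a b = \lambda b$ with $b\ne 0$ then $(a(m)-\lambda)b(m)=0$ for every $m$, forcing $a(m)=\lambda$ at any index with $b(m)\ne 0$; conversely $e_m$ is an eigenvector with eigenvalue $a(m)$.

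Finally, for (6) and (7): if $(a(m))_{m\in\Lambda}$ vanishes at infinity, let $a_N$ be the sequence obtained by setting $a(m)$ to $0$ off the finite set $\{m\in\Lambda : |a(m)|\ge 1/N\}$; then $M_{a_N}$ has finite rank and $\|M_a - M_{a_N}\| = \|a - a_N\|_{l^{\infty}}\le 1/N\to 0$, so $M_a$ is compact as a norm limit of finite-rank operators. If $(a(m))$ does not vanish at infinity, there exist $\varepsilon>0$ and infinitely many distinct indices $m_k$ with $|a(m_k)|\ge\varepsilon$; then $e_{m_k}\to 0$ weakly while $\|M_a e_{m_k}\| = |a(m_k)|\ge\varepsilon$, so $M_a$ is not compact. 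For (7), if $a(m)=0$ outside a finite set $S$ then $\operatorname{range}(M_a)\subseteq\operatorname{span}\{e_m : m\in S\}$ is finite-dimensional, while if $a(m)\ne 0$ for infinitely many $m$ the vectors $M_a e_m = a(m)e_m$ span an infinite-dimensional subspace. None of the steps is deep; the point requiring the most care is the maximality claim in (3), where one must argue that commuting with all of $\mathcal{M}(l^2)$ forces an operator to be diagonal and then verify that its diagonal is bounded.
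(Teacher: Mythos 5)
The paper offers no proof of this theorem --- it is quoted verbatim from the cited reference \cite{bhatia2009notes} as a standard fact about multiplication (diagonal) operators on $l^2(\Lambda)$ --- so there is nothing internal to compare against. Your argument is the standard one and is correct in all seven items; the only cosmetic point is in (3), where it is cleaner to observe $|c(m)| = \|Te_m\| \le \|T\|$ \emph{before} asserting $T = M_c$, so that $M_c$ is known to be bounded when you invoke density of $\operatorname{span}\{e_m\}$.
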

We recall the following theorem.

\begin{theorem} \cite[Theorem 3.6.3]{grafakos2008classical}\label{thm 2.12}
A bounded linear operator $T$ commutes with all translations on $L^2(\mathbb{T}^n)$ if and only if there exists a bounded sequence $a=\left(a_\alpha\right)_{\alpha \in \mathbb{Z}^n}$ such that
$$
(T f)(x)=\sum_{\alpha \in \mathbb{Z}^n} a_\alpha \widehat{f}(\alpha) e^{i \alpha \cdot \theta}
$$
for all $f \in C^{\infty}(\mathbb{T}^n)$. Moreover, in this case we have $\|T\|_{L^2(\mathbb{T}^n) \longrightarrow L^2(\mathbb{T}^n)}=\left\|(a_\alpha)\right\|_{\infty}$.
\end{theorem}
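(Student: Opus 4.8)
The plan is to exploit the fact that the exponentials $e_\alpha(\theta):=e^{i\alpha\cdot\theta}$, $\alpha\in\mathbb{Z}^n$, form an orthonormal basis of $L^2(\mathbb{T}^n)$ which simultaneously diagonalizes every translation. Writing $\tau_y$ for translation by $y$, i.e. $(\tau_y f)(\theta)=f(\theta-y)$, I would first record the elementary identity $\tau_y e_\alpha=e^{-i\alpha\cdot y}e_\alpha$, so that $e_\alpha$ is a joint eigenvector of $\{\tau_y:y\in\mathbb{T}^n\}$ with eigencharacter $y\mapsto e^{-i\alpha\cdot y}$. Everything then reduces to understanding how a translation-commuting operator acts on this basis.

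For the direction ($\Leftarrow$), suppose $Tf=\sum_\alpha a_\alpha\widehat f(\alpha)e_\alpha$ with $(a_\alpha)\in l^\infty$. Plancherel (the unitarity of $F$, recorded in the Preliminaries) gives $\|Tf\|_2^2=\sum_\alpha|a_\alpha|^2|\widehat f(\alpha)|^2\le\|(a_\alpha)\|_\infty^2\|f\|_2^2$, hence $T$ is bounded with $\|T\|\le\|(a_\alpha)\|_\infty$; testing on $f=e_\alpha$, for which $Tf=a_\alpha e_\alpha$, yields the reverse inequality and thus $\|T\|=\|(a_\alpha)\|_\infty$. That $T\tau_y=\tau_y T$ is a one-line computation on Fourier coefficients, $\widehat{T\tau_y f}(\alpha)=a_\alpha e^{-i\alpha\cdot y}\widehat f(\alpha)=\widehat{\tau_y Tf}(\alpha)$, together with injectivity of $F$.

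For the direction ($\Rightarrow$), assume $T\in B(L^2(\mathbb{T}^n))$ commutes with every $\tau_y$. Fixing $\alpha$, from $\tau_y(Te_\alpha)=T(\tau_y e_\alpha)=e^{-i\alpha\cdot y}(Te_\alpha)$ I see that $g:=Te_\alpha$ lies in the joint eigenspace $\{g:\tau_y g=e^{-i\alpha\cdot y}g\ \forall y\}$. Expanding $g=\sum_\beta\widehat g(\beta)e_\beta$ and matching coefficients forces $\widehat g(\beta)(e^{-i\beta\cdot y}-e^{-i\alpha\cdot y})=0$ for all $\beta,y$; since for $\beta\ne\alpha$ one may choose $y$ with $e^{-i\beta\cdot y}\ne e^{-i\alpha\cdot y}$, this gives $g=a_\alpha e_\alpha$ for a scalar $a_\alpha$, i.e. $Te_\alpha=a_\alpha e_\alpha$, and $|a_\alpha|=\|Te_\alpha\|_2\le\|T\|$ so $(a_\alpha)\in l^\infty$ with $\|(a_\alpha)\|_\infty\le\|T\|$. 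Finally, for $f\in C^\infty(\mathbb{T}^n)$ the Fourier series $f=\sum_\alpha\widehat f(\alpha)e_\alpha$ converges in $L^2(\mathbb{T}^n)$ (indeed uniformly, by rapid decay of $\widehat f$), so continuity of $T$ gives $Tf=\sum_\alpha a_\alpha\widehat f(\alpha)e_\alpha$, the asserted representation; the norm identity follows from the ($\Leftarrow$) computation.

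The only substantive point is the one-dimensionality of the joint eigenspace, $\{g:\tau_y g=e^{-i\alpha\cdot y}g\ \forall y\}=\mathbb{C}e_\alpha$; the rest is bookkeeping with Plancherel and injectivity of $F$. An equivalent, more structural route—which also fits the algebraic language of the paper—is to conjugate by $F$ onto $l^2(\mathbb{Z}^n)$: there each $\tau_y$ becomes the multiplication operator $M_{\chi_y}$ with $\chi_y(\alpha)=e^{-i\alpha\cdot y}$, which lies in the maximal commutative algebra $\mathcal{M}(l^2)$ of Theorem~\ref{multiplication}; applying the same eigenvector argument to the standard basis vectors $\delta_\alpha$ shows that the commutant of $\{M_{\chi_y}:y\in\mathbb{T}^n\}$ is contained in $\mathcal{M}(l^2)$, and maximal commutativity gives equality, with the diagonal of the resulting multiplier recovering $(a_\alpha)$ and $\|T\|=\|M_{(a_\alpha)}\|=\|(a_\alpha)\|_\infty$.
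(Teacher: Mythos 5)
Your proof is correct. The paper does not prove this statement---it is quoted directly from Grafakos \cite[Theorem 3.6.3]{grafakos2008classical}---and your argument (each $e_\alpha$ is a joint eigenvector of all translations with a character that separates distinct $\alpha$, so a translation-commuting $T$ must be diagonal in the exponential basis, with boundedness and the norm identity $\|T\|=\|(a_\alpha)\|_\infty$ read off via Plancherel) is exactly the standard proof of that cited result.
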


\begin{theorem} \cite[Theorem 3.172]{iorio2001fourier} Let $a=(a_\alpha)_{\alpha \in \mathbb{Z}^n} \in \mathcal{S}^{\prime}(\mathbb{Z}^n)$. Then there exists a unique $u \in \mathcal{D}^{\prime}(\mathbb{T}^n)$ such that $\hat{u}=a$. Conversely, if $u \in \mathcal{D}^{\prime}(\mathbb{T}^n)$ then $\hat{u} \in \mathcal{S}^{\prime}(\mathbb{Z}^n)$. In other words, $\widehat{\mathcal{D}^{\prime}(\mathbb{T}^n)}=\mathcal{S}^{\prime}(\mathbb{Z}^n)$.
\end{theorem}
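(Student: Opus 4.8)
The plan is to read the statement as asserting that the transpose map $F\colon\mathcal D'(\mathbb T^n)\to\mathcal S'(\mathbb Z^n)$, given by $\langle Fu,\phi\rangle=\langle u,\iota\circ F^{-1}\phi\rangle$, is a well-defined linear bijection, once we identify a slowly growing sequence $a=(a_\alpha)\in\mathcal S'(\mathbb Z^n)$ with the functional $\phi\mapsto\sum_{\alpha\in\mathbb Z^n}a_\alpha\phi_\alpha$ on $\mathcal S(\mathbb Z^n)$ (the series converges absolutely since $\phi$ decays rapidly, and every continuous functional on $\mathcal S(\mathbb Z^n)$ is of this form, by testing against finitely supported sequences). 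With this dictionary I would prove, in order: (i) for $u\in\mathcal D'(\mathbb T^n)$ the functional $Fu$ is represented by a sequence of slow growth, namely $\widehat u(\alpha)=\langle u,\theta\mapsto e^{-i\alpha\cdot\theta}\rangle$; (ii) $F$ is injective; (iii) $F$ is onto. These three facts are exactly the ``conversely'' clause, the uniqueness, and the existence asserted in the theorem.

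For (i), continuity of $u$ on the Fr\'echet space $C^\infty(\mathbb T^n)$ gives constants $C>0$ and $N\in\mathbb Z_+$ with $|\langle u,\psi\rangle|\le C\max_{|\beta|\le N}\sup_\theta|\partial^\beta\psi(\theta)|$ for all $\psi$. Writing $(F^{-1}\phi)(\theta)=\sum_\alpha\phi_\alpha e^{i\alpha\cdot\theta}$, a convergent series in $C^\infty(\mathbb T^n)$ because $\phi$ is rapidly decaying, and applying $u$ term by term shows that $Fu$ corresponds to the sequence $\widehat u(\alpha)=\langle u,\theta\mapsto e^{-i\alpha\cdot\theta}\rangle$. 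Since $\partial^\beta e^{-i\alpha\cdot\theta}=(-i\alpha)^\beta e^{-i\alpha\cdot\theta}$ has sup-norm $|\alpha^\beta|\le\|\alpha\|^{|\beta|}$, it follows that $|\widehat u(\alpha)|\le C(1+\|\alpha\|)^{N}$, so $\widehat u\in\mathcal S'(\mathbb Z^n)$. For (ii), if $\widehat u=0$ then $\langle u,\iota\circ F^{-1}\phi\rangle=0$ for every $\phi\in\mathcal S(\mathbb Z^n)$; as $F^{-1}\colon\mathcal S(\mathbb Z^n)\to C^\infty(\mathbb T^n)$ is a bijection and $\iota$ is an involution of $C^\infty(\mathbb T^n)$, the elements $\iota\circ F^{-1}\phi$ exhaust $C^\infty(\mathbb T^n)$, forcing $u=0$; this also yields uniqueness of the preimage in the existence clause.

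For (iii), given $a\in\mathcal S'(\mathbb Z^n)$ I would define $u$ on $C^\infty(\mathbb T^n)$ by $\langle u,\psi\rangle:=\sum_{\alpha\in\mathbb Z^n}a_\alpha\,\widehat{\iota\psi}(\alpha)$, a choice forced by requiring $\langle u,\iota\circ F^{-1}\phi\rangle=\sum_\alpha a_\alpha\phi_\alpha$ and substituting $\phi=F(\iota\psi)$, using that $\iota$ and $F^{-1}$ are mutually inverse in the relevant sense. The series converges absolutely because $\widehat{\iota\psi}\in\mathcal S(\mathbb Z^n)$ decays rapidly while $a$ grows slowly. To check $u\in\mathcal D'(\mathbb T^n)$, integration by parts on $\mathbb T^n$ gives $(i\alpha)^\beta\widehat{\iota\psi}(\alpha)=\widehat{\partial^\beta(\iota\psi)}(\alpha)$, whose modulus is at most $\sup_\theta|\partial^\beta\psi(\theta)|$; summing over $|\beta|\le M$ and using $(1+\|\alpha\|)^{M}\le C_M\sum_{|\beta|\le M}|\alpha^\beta|$ we obtain $(1+\|\alpha\|)^{M}|\widehat{\iota\psi}(\alpha)|\le C_M'\max_{|\beta|\le M}\sup_\theta|\partial^\beta\psi(\theta)|$. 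Combining this with $|a_\alpha|\le C_a(1+\|\alpha\|)^{N_a/2}$ and choosing $M$ with $M-N_a/2>n$, so that $\sum_\alpha(1+\|\alpha\|)^{N_a/2-M}<\infty$, gives $|\langle u,\psi\rangle|\le C'\max_{|\beta|\le M}\sup_\theta|\partial^\beta\psi(\theta)|$, i.e.\ $u$ is continuous, hence a periodic distribution. Finally, unwinding the definition of $F$ on $\mathcal D'(\mathbb T^n)$ shows $\widehat u=a$; together with (i) and (ii) this proves $\widehat{\mathcal D'(\mathbb T^n)}=\mathcal S'(\mathbb Z^n)$.

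The main obstacle is the continuity estimate in (iii): one must make the integration-by-parts decay of $\widehat{\iota\psi}$ fully quantitative and uniform in $\alpha$, and balance the polynomial growth exponent of $a$ against the dimension $n$ so that the defining series converges and is dominated by one $C^\infty(\mathbb T^n)$-seminorm. The rest — carrying the involution $\iota$ through the definitions and identifying $\mathcal S'(\mathbb Z^n)$ with the continuous dual of $\mathcal S(\mathbb Z^n)$ — is routine bookkeeping.
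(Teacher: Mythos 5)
Your proof is correct. Note that the paper does not prove this statement at all -- it is quoted verbatim from the cited reference \cite[Theorem 3.172]{iorio2001fourier} as a known preliminary -- so there is no in-paper argument to compare against; your transpose-map argument (slow growth of $\hat u(\alpha)=\langle u, e^{-i\alpha\cdot\theta}\rangle$ via a seminorm bound, injectivity since $\iota\circ F^{-1}$ exhausts $C^\infty(\mathbb T^n)$, and surjectivity by defining $\langle u,\psi\rangle=\sum_\alpha a_\alpha\widehat{\iota\psi}(\alpha)$ with the integration-by-parts decay estimate) is exactly the standard textbook proof that the reference supplies.
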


\begin{definition}
    
Let $y \in \mathbb{R}^n$ be fixed. Then for a measurable function $f$ on $\mathbb{T}^n$, we define translation of $f$ by
$$
\left(\tau_y f\right)(x)=f(x-y), \forall x \in \mathbb{R}^n .
$$
\end{definition}

	\section{Radial operators on Weighted Bergman spaces of Reinhardt domains}\label{SEC:p=3}	

 An open subset $\Omega$ of $\mathbb{C}^n$ is said to be a \textit{Reinhardt domain} if $z \in \Omega$ and $\lambda \in \mathbb{T}^n$ implies $\lambda z \in \Omega$. Some examples of such domains are the unit ball, the unit polydisc, poly-annulus and complements of those in $\mathbb{C}^n$. This class of domains is covered in great detail in the monograph \cite{jarnicki2008first}. Throughout this section and the next, $\Omega$ will be assumed to be bounded and connected Reinhardt domain in $\mathbb{C}^n$.

The \textit{Reinhardt shadow} of $\Omega$, denoted by $|\Omega|$, is the set
 $$ |\Omega| := \{(|z_1|, |z_2|, \hdots, |z_n|) : (z_1, z_2, \hdots, z_n) \in \Omega \} \subseteq (\mathbb{R}^+ \cup \{0\})^n.$$

 A function $\omega: \Omega \rightarrow [0, \infty]$ which is measurable and positive almost everywhere, is said to be a \textit{multi-radial weight} on $\Omega$ if there exist a function $a: |\Omega| \rightarrow[0, \infty]$ such that

 $$
\omega\left(z_1, \ldots, z_n\right)= a\left(\left|z_1\right|, \ldots,\left|z_n\right|\right).
$$
For a measurable function $f: \Omega \rightarrow \mathbb{C}$  and $p \in [1, \infty)$ we define 
$$
\|f\|_{p, \omega}:=\left(\int_{\Omega}|f|^p \omega d V\right)^{1/p} ,
$$
where, $d V$ denotes the standard Lebesgue volume measure on $\Omega$. If $f$ is continuous on $\Omega$ and $r= (r_1, r_2, \hdots, r_n) \in |\Omega|$, then we define $f_r$ to be the function on the unit torus $\mathbb{T}^n=\left\{(z_1, z_2, \hdots, z_n: \left|z_j\right|=1\right.$, for $\left.j=1, \ldots, n\right\} \subset \mathbb{C}^n$ given by $f_r\left(e^{i \theta_1}, \ldots, e^{i \theta_n}\right):=$ $f\left(r_1 e^{i \theta_1}, \ldots, r_n e^{i \theta_n}\right)$. 
By calculating the integral in polar coordinate and then using  Fubini's theorem we get 
$$
\|f\|_{p, \omega}^p=\int_{|\Omega|}\left\|f_r\right\|_{L^p\left(\mathbb{T}^n\right)}^p r_1 r_2 \ldots r_n \omega(r) d r.
$$
$L^p(\Omega, \omega)$ denotes the collection of measurable functions $f$ for which $\|f\|_{p, \omega}<\infty$, which forms a Banach space with respect to the norm $\|.\|_{p, \omega}$; for $p=2$, it is a Hilbert space.
Let $A^p(\Omega, \omega)=L^p(\Omega, \omega) \cap \mathcal{O}(\Omega)$, where $\mathcal{O}(\Omega)$ denotes the set of all holomorphic functions on $\Omega$.

Following the convention in \cite{chakrabarti2024projections}, a weight $\omega: \Omega \rightarrow[0, \infty]$ is said to be \textit{admissible} if for each compact set $K \subset \Omega$, there is a constant $C_K>0$ such that for each $f \in A^p(\Omega, \omega)$ we have
$$
\sup _K|f| \leq C_K\|f\|_{p, \omega}.
$$

Positive continuous functions on $\Omega$ are examples of admissible weights on $\Omega$.

If $\omega$ is a general admissible weight on $\Omega$, then $A^p(\Omega, \omega)$ forms a closed subspace of $L^p(\Omega, \omega)$, and hence a Banach space( for $p=2$, a Hilbert space). It is called a \textit{weighted $L^p$-Bergman space} on $\Omega$ with weight $\omega$. 

For $\alpha \in \mathbb{Z}^n$, $e_\alpha$ will denote the Laurent monomial function of exponent $\alpha$ defined by
$$
e_\alpha(z)=z_1^{\alpha_1} \ldots z_n^{\alpha_n} \text {, }z=(z_1, z_2, \hdots, z_n) \in \mathbb{C}^n.
$$
Following \cite{edholm2017bergman}, the set of $L^p$-allowable multi-indices for $\Omega$ with respect to $\omega$, denoted by $\Lambda(\Omega, \omega, p)$, is defined by 
$$
\Lambda(\Omega, \omega, p)=\left\{\alpha \in \mathbb{Z}^n: e_\alpha \in L^p(\Omega, \omega)\right\} .
$$

If $f \in \mathcal{O}({\Omega})$, then $f$ has a unique Laurent series expansion

$$ f(z) = \sum_{\alpha \in \mathbb{Z}^n} c_{\alpha}(f) z^{\alpha} ,   z\in \Omega$$

The series converges uniformly on compact subset of $\Omega$. In fact, it converges locally normally on $\Omega$, that is, for a compact subset $K$ the series $\sum \|c_{\alpha}(f)e_{\alpha}\|_K$ is finite, where $\|.\|_K = \text{sup}_K |.|$ (See \cite{fritzsche2002holomorphic} for a proof). The map
$ c_\alpha : \mathcal{O}(\Omega) \rightarrow \mathbb{C}$ is called the \textit{$\alpha$-th coordinate functional} of the domain $\Omega$.\\

We recall the following definition.
\begin{definition} \cite[Definition 2.1]{chakrabarti2024projections} Let $A$ be a Banach space, $n$ a positive integer, $I \subseteq \mathbb{Z}^n$ a set of multi-indices. A collection $\{f_\alpha: \alpha \in I\}$ is said to form a Banach-space basis of $A$ if for for each $ g\in A$ there are unique complex numbers $\{c_\alpha\}$ such that 

$$ g = \displaystyle \lim_{N\to \infty} \sum_{\substack{\|\alpha\|_\infty \leq N \\ \alpha \in I}} c_\alpha f_\alpha.$$
where the sequence of partial sums converges in the norm topology of $A$. 

\end{definition}

Now we state the following theorem.

\begin{theorem}\cite[Theorem 2.12]{chakrabarti2024projections}
The collection of all  Laurent monomials \linebreak
$\left\{e_\alpha: \alpha \in \Lambda(\Omega, \omega, p)\right\}$ forms a Banach-space basis of $A^p(\Omega, \omega)$. The functionals dual to this basis are the coordinate functionals $\left\{c_\alpha: \alpha \in \Lambda(\Omega, \omega, p)\right\}$, and the norm of $c_\alpha: A^p(\Omega, \omega) \rightarrow \mathbb{C}$ is given by
$$
\left\|c_\alpha\right\|_{A^p(\Omega, \omega)^{\prime}}=\frac{1}{\left\|e_\alpha\right\|_{p, \omega}} .
$$

Thus, if $f \in A^p(\Omega, \omega)$, the Laurent series of $f$ written as $\sum_{\alpha \in \mathbb{Z}^n} c_\alpha(f) e_\alpha$ consists only of terms corresponding to monomials $e_\alpha \in A^p(\Omega, \omega)$, i.e., if $\alpha \notin \Lambda(\Omega, \omega, p)$, then $c_\alpha(f)=0$.

\end{theorem}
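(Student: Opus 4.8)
The plan is to establish the three assertions of the theorem separately: (a) each coordinate functional $c_\alpha$ is bounded on $A^p(\Omega,\omega)$ with $\|c_\alpha\|_{A^p(\Omega,\omega)'}=\|e_\alpha\|_{p,\omega}^{-1}$; (b) $c_\alpha(f)=0$ whenever $\alpha\notin\Lambda(\Omega,\omega,p)$; and (c) for every $f\in A^p(\Omega,\omega)$ the partial sums $S_Nf:=\sum_{\|\alpha\|_\infty\le N}c_\alpha(f)e_\alpha$ converge to $f$ in $\|\cdot\|_{p,\omega}$, with the coefficients unique. The common engine will be the torus action $(V_\lambda f)(z)=f(\lambda z)$, $\lambda\in\mathbb{T}^n$: since $z\mapsto\lambda z$ is a unitary change of variables preserving $\Omega$ and $\omega(\lambda z)=\omega(z)$ by multi-radiality, I would first record that each $V_\lambda$ is a surjective isometry of $L^p(\Omega,\omega)$ carrying $\mathcal O(\Omega)$ into itself, hence a surjective isometry of $A^p(\Omega,\omega)$; together with the polar-coordinate identity $\|g\|_{p,\omega}^p=\int_{|\Omega|}\|g_r\|_{L^p(\mathbb{T}^n)}^p\, r_1\cdots r_n\,\omega(r)\,dr$ already recorded in the text, this is all the structure I need.

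For (a) and (b) I would introduce, for fixed $\alpha$, the averaging map $(P_\alpha f)(z)=\int_{\mathbb{T}^n}\lambda^{-\alpha}f(\lambda z)\,d\sigma(\lambda)$, $\sigma$ being normalized Haar measure on $\mathbb{T}^n$; this is defined pointwise since $f$ is continuous on $\Omega$. On the one hand, integrating the Laurent series $f=\sum_\beta c_\beta(f)e_\beta$ term by term — legitimate because it converges uniformly on the compact torus $\{\zeta\in\mathbb C^n:|\zeta_j|=|z_j|,\ 1\le j\le n\}\subset\Omega$ — and using $\int_{\mathbb{T}^n}\lambda^{\beta-\alpha}\,d\sigma(\lambda)=\delta_{\alpha\beta}$ shows $P_\alpha f=c_\alpha(f)e_\alpha$. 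On the other hand, Minkowski's integral inequality (for the inner integral over $\mathbb{T}^n$ and the outer $L^p$-norm over $\Omega$) together with $\|V_\lambda f\|_{p,\omega}=\|f\|_{p,\omega}$ and $|\lambda^{-\alpha}|=1$ gives $\|P_\alpha f\|_{p,\omega}\le\|f\|_{p,\omega}$. Combining, $|c_\alpha(f)|\,\|e_\alpha\|_{p,\omega}\le\|f\|_{p,\omega}$: this forces $c_\alpha(f)=0$ when $\|e_\alpha\|_{p,\omega}=\infty$, i.e.\ when $\alpha\notin\Lambda(\Omega,\omega,p)$, proving (b); and when $\alpha\in\Lambda(\Omega,\omega,p)$ it yields $\|c_\alpha\|\le\|e_\alpha\|_{p,\omega}^{-1}$, the reverse inequality being immediate from $c_\alpha(e_\alpha)=1$, proving (a).

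For (c), note first that by (b) the partial sum $S_Nf$ automatically runs only over $\alpha\in\Lambda(\Omega,\omega,p)$, matching the definition of a Banach-space basis. The decisive step is to recognize $S_N$ as a fibrewise square Dirichlet partial-sum operator: with $g_r(e^{i\theta_1},\ldots,e^{i\theta_n})=g(r_1e^{i\theta_1},\ldots,r_ne^{i\theta_n})$ one has $(S_Nf)_r=\Pi_N(f_r)$, where $\Pi_N$ keeps the Fourier modes $\alpha\in\mathbb{Z}^n$ with $\|\alpha\|_\infty\le N$; this identification again rests on the local normal convergence of the Laurent series and the polar-coordinate identity. Plugging into that identity,
\[
\|S_Nf-f\|_{p,\omega}^p=\int_{|\Omega|}\bigl\|\Pi_N(f_r)-f_r\bigr\|_{L^p(\mathbb{T}^n)}^p\,r_1\cdots r_n\,\omega(r)\,dr ,
\]
and for $1<p<\infty$ I would invoke the M.\ Riesz theorem, applied one variable at a time, to get $\|\Pi_N\|_{L^p(\mathbb{T}^n)\to L^p(\mathbb{T}^n)}\le C_p^{\,n}$ uniformly in $N$; then the integrand is dominated by $(1+C_p^{\,n})^p\|f_r\|_{L^p(\mathbb{T}^n)}^p$, which is integrable against $r_1\cdots r_n\,\omega(r)\,dr$ with integral $\le(1+C_p^{\,n})^p\|f\|_{p,\omega}^p$, while for each $r$ the continuity of $f_r$ on $\mathbb{T}^n$ and the density of trigonometric polynomials give $\Pi_N(f_r)\to f_r$ in $L^p(\mathbb{T}^n)$. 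Dominated convergence then closes the argument, and uniqueness of the expansion follows by applying the bounded functionals $c_\beta$ to both sides of any convergent series.

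I expect (c) to be the crux. Two difficulties stand out. The first is bookkeeping: rigorously identifying the abstract truncation $S_N$ with an honest Fourier partial sum on $\mathbb{T}^n$ — this is exactly where the local normal convergence of the Laurent expansion and the polar-coordinate/Fubini reduction must be used carefully. The second, more serious, is the endpoint $p=1$: the square Dirichlet operators $\Pi_N$ are \emph{not} uniformly bounded on $L^1(\mathbb{T}^n)$, so the clean dominated-convergence argument fails and one needs an additional ingredient — for instance exploiting that each $f_r$ lies in $C(\mathbb{T}^n)\subset L^2(\mathbb{T}^n)$, so that $\Pi_N(f_r)\to f_r$ in $L^1(\mathbb{T}^n)$ fibrewise, combined with a separate domination bound for the outer integral, or comparing the partial sums with their Ces\`aro means. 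Everything else — the isometry property of $V_\lambda$, the computation of $P_\alpha f$, and the norm of $c_\alpha$ — is routine.
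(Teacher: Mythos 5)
This statement is imported verbatim from \cite{chakrabarti2024projections}; the paper you are reading supplies no proof of it, so your proposal can only be measured against the source and on its own merits. Your parts (a) and (b) are correct and complete: the averaging operator $P_\alpha f=\int_{\mathbb{T}^n}\lambda^{-\alpha}(V_\lambda f)\,d\sigma(\lambda)$, the identity $P_\alpha f=c_\alpha(f)e_\alpha$ via uniform convergence of the Laurent series on the compact torus orbit of $z$, and the bound $|c_\alpha(f)|\,\|e_\alpha\|_{p,\omega}\le\|f\|_{p,\omega}$ from Minkowski's integral inequality and the isometry of $V_\lambda$ (which uses multi-radiality of $\omega$) is exactly the standard route, and it delivers both the norm of $c_\alpha$ and the vanishing of $c_\alpha(f)$ off $\Lambda(\Omega,\omega,p)$ in one stroke. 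Part (c) is also correct as written for $1<p<\infty$: the fibrewise identification $(S_Nf)_r=\Pi_N(f_r)$, the M.~Riesz bound $\|\Pi_N\|_{L^p(\mathbb{T}^n)\to L^p(\mathbb{T}^n)}\le C_p^{\,n}$ applied one variable at a time, and dominated convergence in the polar-coordinate identity close the argument; note that for the fibrewise convergence you do not even need density of trigonometric polynomials, since the Laurent series converges uniformly on each torus $\{|z_j|=r_j\}\subset\Omega$. For $p=2$, the only case this paper actually uses, the whole theorem is immediate from the orthogonality of $\{e_\alpha\}$ and Parseval, so your machinery is more than sufficient there.

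The genuine gap is the endpoint $p=1$, which the theorem as stated includes and which you flag but do not close. Be aware that neither of your proposed repairs works as described: the fibrewise convergence $\Pi_N(f_r)\to f_r$ in $L^1(\mathbb{T}^n)$ was never the problem (it already follows from uniform convergence on the torus), and Ces\`aro comparison only trades the difficulty for the uniform $A^1$-boundedness of the operators $f\mapsto S_Nf-\sigma_Nf$, which is not easier. What is actually missing is an $N$-independent dominating function for the outer integral over $|\Omega|$: the natural candidates fail, since $\|f_r\|_{L^2(\mathbb{T}^n)}$ need not be integrable against $r_1\cdots r_n\,\omega(r)\,dr$ for $f\in A^1(\Omega,\omega)$, and dominating $\|\Pi_N(f_r)\|_{L^1(\mathbb{T}^n)}$ by $\sum_\alpha|c_\alpha(f)|r^\alpha$ would require absolute convergence of the Laurent series in $A^1$-norm, which is false in general. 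So as it stands your argument proves the theorem only for $1<p<\infty$; to cover $p=1$ you must either import the argument actually given in \cite{chakrabarti2024projections} (which cannot rest on uniform $L^1$-boundedness of the square Dirichlet projections, since that fails) or explicitly restrict the range of $p$ in your statement.
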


Also, elementary computation shows that, $\left\{e_\alpha: \alpha \in \Lambda(\Omega, \omega, 2)\right\}$ forms an orthogonal basis of $A^2(\Omega, \omega)$.

\subsection{Representation of radial operators on $A^2(\Omega, \omega)$}
\hfill\\

\hspace{0.5mm} In this subsection, we characterize radial opearators on  the Hilbert space $A^2(\Omega, \omega)$(where $\omega$ is an admissible and multi-radial weight) in terms of their integral representations. 

We define the operator $T : A^2(\Omega, \omega) \longrightarrow l^2(\Lambda(\Omega, \omega, 2))$ given by 
\begin{align}
    T(f) = \left(\left\|c_\alpha\right\|_{A^2(\Omega, \omega)^{\prime}}\int_{\Omega} f(z) \bar{z}^\alpha \omega d V(z)\right)_{\alpha \in \Lambda(\Omega, \omega, 2)}
\end{align}

The operator $T$ is unitary and its adjoint $T^* : l^2(\Lambda(\Omega, \omega, 2)) \longrightarrow A^2(\Omega, \omega)$ is given by 
\begin{align}
    T^*\left(\left(c_\alpha\right)_{\alpha \in \Lambda(\Omega, \omega, 2)}\right)=\sum_{\alpha \in \Lambda(\Omega, \omega, 2)}{\|c_\alpha\|}_{A^2(\Omega, \omega)^{\prime}}  c_\alpha z^\alpha
\end{align}

For each $\xi \in \mathbb{T}^n$, the operator $V_\xi$ is a unitary operator on $A^2(\Omega, \omega).$

\begin{lemma} \label{sec 3, lem 1}
    Let $\eta = (\eta_1, \eta_2, \hdots, \eta_n) \in \mathbb{R}^n$. If $\xi= (e^{-i\eta_1}, e^{-i\eta_2}, \hdots, e^{-i\eta_n})$, then $\tau_\eta$ and $V_\xi$ are unitarily equivalent. More specifically, $T^*F\tau_\eta = V_\xi T^*F.$ 
\end{lemma}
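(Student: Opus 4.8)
The plan is to track what each operator does on the Fourier side, where everything becomes a multiplication operator, and then chase the diagram. Recall that $F\colon L^2(\mathbb{T}^n)\to l^2(\mathbb{Z}^n)$ is unitary and, by Theorem \ref{thm 2.12} together with the standard fact that $\widehat{\tau_\eta g}(\alpha)=e^{-i\alpha\cdot\eta}\widehat{g}(\alpha)$, the translation $\tau_\eta$ is conjugated by $F$ to the multiplication operator $M_{b}$ on $l^2(\mathbb{Z}^n)$ with symbol $b_\alpha = e^{-i\alpha\cdot\eta}$; that is, $F\tau_\eta = M_b F$. On the other side, I would compute $V_\xi$ on the orthogonal basis $\{e_\alpha : \alpha\in\Lambda(\Omega,\omega,2)\}$: since $V_\xi e_\alpha (z) = e_\alpha(\xi z) = \xi^\alpha e_\alpha(z)$ and $\xi = (e^{-i\eta_1},\dots,e^{-i\eta_n})$, we get $V_\xi e_\alpha = e^{-i\alpha\cdot\eta} e_\alpha$, so $V_\xi$ is diagonal with exactly the same eigenvalues $e^{-i\alpha\cdot\eta}$. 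Hence, via the unitary $T$, the operator $V_\xi$ on $A^2(\Omega,\omega)$ is conjugated to the multiplication operator $M_{b'}$ on $l^2(\Lambda(\Omega,\omega,2))$ with $b'_\alpha = e^{-i\alpha\cdot\eta}$, i.e. $TV_\xi = M_{b'} T$, or equivalently $V_\xi T^* = T^* M_{b'}$.

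Next I would reconcile the two index sets. The restriction of $F$ to the closed subspace $L^2_{\Lambda}(\mathbb{T}^n)$ (with $\Lambda = \Lambda(\Omega,\omega,2)$) is a unitary onto $l^2(\Lambda(\Omega,\omega,2))$, still denoted $F$, and the conjugated translation $M_b$ restricts to $M_{b'}$ on this subspace because the symbol $e^{-i\alpha\cdot\eta}$ only ever gets evaluated at $\alpha\in\Lambda$. So on $l^2(\Lambda(\Omega,\omega,2))$ the two multiplication symbols literally coincide. Stringing the identities together: $V_\xi T^* F = T^* M_{b'} F = T^* F \tau_\eta$, where the first equality uses $V_\xi T^* = T^* M_{b'}$ and the second uses $F\tau_\eta = M_{b'}F$ on $L^2_\Lambda(\mathbb{T}^n)$. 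This is exactly the claimed relation $T^* F \tau_\eta = V_\xi T^* F$, and since $T^*F$ is a unitary from $L^2_\Lambda(\mathbb{T}^n)$ onto $A^2(\Omega,\omega)$, it exhibits $\tau_\eta$ and $V_\xi$ as unitarily equivalent.

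The only genuine point requiring care — the main obstacle, though a mild one — is the bookkeeping of domains and index sets: making sure that when one passes from $L^2(\mathbb{T}^n)$ to $L^2_\Lambda(\mathbb{T}^n)$ and from $l^2(\mathbb{Z}^n)$ to $l^2(\Lambda(\Omega,\omega,2))$, the operator $\tau_\eta$ indeed leaves $L^2_\Lambda(\mathbb{T}^n)$ invariant (which it does, since multiplication by $e^{-i\alpha\cdot\eta}$ on Fourier coefficients preserves the support condition), so that the restricted $F$ is well defined and unitary and the diagram chase is legitimate. A secondary verification is the identity $V_\xi e_\alpha = \xi^\alpha e_\alpha$ and that $T$ diagonalizes $V_\xi$ with these eigenvalues, which is immediate from the definitions of $T$, $T^*$ in $(3.1)$–$(3.2)$ and the orthogonality of $\{e_\alpha\}$; I would spell this out in a line or two but it is routine.
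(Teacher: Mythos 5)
Your proposal is correct and follows essentially the same route as the paper: the paper verifies the identity directly on the basis vectors $e_\beta$, computing that $T^*F\tau_\eta F^{-1}T e_\beta = e^{-i\eta\cdot\beta}e_\beta = V_\xi e_\beta$, which is exactly your observation that both operators are diagonal with the common eigenvalues $e^{-i\alpha\cdot\eta}$ under the unitary $T^*F$. Your explicit packaging via the multiplication operator $M_b$ and the bookkeeping of the restriction to $L^2_\Lambda(\mathbb{T}^n)$ is a harmless reorganization of the same computation.
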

\begin{proof}

     It is enough to show that
$$
T^* F \tau_\eta F^{-1} T e_\beta=V_{\xi} e_\beta
$$
for all $\beta \in \Lambda(\Omega, \omega, 2)$.
Let $\beta \in \Lambda(\Omega, \omega, 2)$ be fixed. Then for $\alpha \in \Lambda(\Omega, \omega, 2)$,
$$
\begin{aligned}
T e_\beta(\alpha) & =\left\|c_\alpha\right\|_{A^2(\Omega, \omega)^{\prime}} \int_{\Omega} z^\beta \overline{z^\alpha} \omega d V(z) \\
& =\left\{\begin{array}{cl}
\left\|e_\beta\right\|_{2,\omega} & \text { if } \alpha=\beta \\
0 & \text { if } \alpha \neq \beta .
\end{array}\right.
\end{aligned}
$$

Now,
$$
\begin{aligned}
T^* F \tau_\eta\left(F^{-1} T\right) e_\beta(z) 
& =T^* F \tau_\eta\left\|e_\beta\right\|_{2,\omega} e^{i (.) \cdot \beta} \\
& =\left\|e_\beta\right\|_{2,\omega}\left(T^* F\right) e^{i((.)-\eta) \cdot \beta} \\
& =\left\|c_\beta\right\|_{A^2(\Omega, \omega)^{\prime}}\left\|e_\beta\right\|_{2,\omega} e^{-i \eta \cdot \beta} z^\beta \\
& =e^{-i \eta \cdot \beta} z^\beta \\
& =V_{\xi} e_\beta(z)
\end{aligned}
$$

Hence the proof.
\end{proof}
From now onwards, $\Lambda(\Omega, \omega, 2)$ will be denoted by $\Lambda$. 

\begin{lemma}

    If $S$ commutes with $\tau_\eta$ for all $\eta \in \mathbb{R}^n$ then $\exists\left(a_\alpha\right)_{\alpha \in \Lambda} \in l^{\infty}(\Lambda)$ such that 
\begin{align}
   (S f)(\theta)=\sum_{\alpha \in \Lambda} a_\alpha \hat{f}(\alpha) e^{i \alpha \cdot \theta}, 
\end{align} for all $f \in C^{\infty}\left(\mathbb{T}^n\right)$ $\cap L_\Lambda^2\left(\mathbb{T}^n\right)$.

\begin{proof}

We note that $L^2\left(\mathbb{T}^n\right)=L_\Lambda^2\left(\mathbb{T}^n\right) \oplus L_\Lambda^2\left(\mathbb{T}^n\right)^{\perp}$. We extend the operator $S$ to $\bar{S}$ on $L^2\left(\mathbb{T}^n\right)$ by defining $\left.\bar{S}\right|_{L_{\Lambda}^2\left(\mathbb{T}^n\right)}=S$ and $\left.\bar{S}\right|_{L_{\Lambda}^2\left(\mathbb{T}^n\right)^{\perp}}=0$. It is a routine computation to check that $\bar{S}$ is bounded on $L^2\left(\mathbb{T}^n\right)$.
Note that $\tau_\eta f \in L^2_\Lambda\left(\mathbb{T}^n\right)$ for all $f \in L^2_\Lambda\left(\mathbb{T}^n\right)$ and $\eta \in \mathbb{R}^n$.
Therefore, $\bar{S}\left(\tau_\eta f\right)=S\left(\tau_\eta f\right)=\tau_\eta(S f)=\tau_\eta(\bar{S} f)$, for all $f \in L_{\Lambda}^2\left(\mathbb{T}^n\right)$.
Similarly, $\bar{S} \tau_\eta f=\tau_\eta \bar{S} f=0$, for all $f \in L_\Lambda^2\left(\mathbb{T}^n\right)^{\perp}$.
Let $f \in C^{\infty}\left(\mathbb{T}^n\right) \cap L^2_\Lambda\left(\mathbb{T}^n\right) $. Then using Theorem \ref{thm 2.12} and the fact that $S f \in L_{\Lambda}^2\left(\mathbb{T}^n\right)$, we have that,
$$ Sf(\theta) = \bar{S}f(\theta)= \sum_{\alpha \in \mathbb{Z}^n} a_\alpha \hat{f}(\alpha) e^{i \alpha \cdot \theta}=\sum_{\alpha \in \Lambda} a_\alpha \hat{f}(\alpha) e^{i \alpha \cdot \theta}.$$
Moreover, it is evident that, $\|S\|_{{{L_\Lambda^2}\left(\mathbb{T}^n\right)} \rightarrow L_\Lambda^2\left(\mathbb{T}^n\right)}=\|( a_\alpha) \|_{{l_ \infty}(\Lambda)}.$
    
\end{proof}
\end{lemma}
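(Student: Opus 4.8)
The plan is to deduce the statement from Theorem \ref{thm 2.12}, which already classifies every bounded operator on the \emph{full} space $L^2(\mathbb{T}^n)$ commuting with all translations. Since our hypothesis only concerns an operator $S$ living on the closed subspace $L^2_\Lambda(\mathbb{T}^n)$, the entire task is to manufacture an extension of $S$ to $L^2(\mathbb{T}^n)$ that still commutes with every $\tau_\eta$, apply the known theorem to it, and then read off the conclusion for $\Lambda$-supported functions.

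Concretely, I would proceed in four steps. First, record the orthogonal decomposition $L^2(\mathbb{T}^n)=L^2_\Lambda(\mathbb{T}^n)\oplus L^2_\Lambda(\mathbb{T}^n)^{\perp}$ together with the key fact that \emph{both} summands are invariant under every translation: since $\widehat{\tau_\eta f}(\alpha)=e^{-i\alpha\cdot\eta}\widehat{f}(\alpha)$, translation preserves the support of Fourier coefficients, so $\tau_\eta$ maps $L^2_\Lambda(\mathbb{T}^n)$ into itself, and being unitary it also preserves the orthogonal complement. Second, define $\bar S$ on $L^2(\mathbb{T}^n)$ by $\bar S|_{L^2_\Lambda(\mathbb{T}^n)}=S$ and $\bar S|_{L^2_\Lambda(\mathbb{T}^n)^{\perp}}=0$; this is bounded with $\|\bar S\|=\|S\|$, and it commutes with all $\tau_\eta$ because on $L^2_\Lambda(\mathbb{T}^n)$ one has $\bar S\tau_\eta f=S\tau_\eta f=\tau_\eta Sf=\tau_\eta\bar S f$ by hypothesis and Step~1, while on $L^2_\Lambda(\mathbb{T}^n)^{\perp}$ both sides vanish. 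Third, Theorem \ref{thm 2.12} applied to $\bar S$ produces a bounded sequence $(a_\alpha)_{\alpha\in\mathbb{Z}^n}$ with $\bar S f(\theta)=\sum_{\alpha\in\mathbb{Z}^n}a_\alpha\widehat{f}(\alpha)e^{i\alpha\cdot\theta}$ for all $f\in C^{\infty}(\mathbb{T}^n)$ and $\|\bar S\|=\|(a_\alpha)\|_\infty$. Fourth, for $f\in C^{\infty}(\mathbb{T}^n)\cap L^2_\Lambda(\mathbb{T}^n)$ one has $\widehat{f}(\alpha)=0$ for $\alpha\notin\Lambda$, so the sum collapses to $\sum_{\alpha\in\Lambda}a_\alpha\widehat{f}(\alpha)e^{i\alpha\cdot\theta}$, and since $\bar S f=Sf$ here, this is the desired formula with $(a_\alpha)_{\alpha\in\Lambda}\in l^{\infty}(\Lambda)$. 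The norm identity $\|S\|=\|(a_\alpha)_{\alpha\in\Lambda}\|_\infty$ then follows: one inequality is $\|(a_\alpha)_{\alpha\in\Lambda}\|_\infty\le\|(a_\alpha)_{\alpha\in\mathbb{Z}^n}\|_\infty=\|\bar S\|=\|S\|$, and the reverse comes from testing $S$ on the monomials $e_\alpha$, $\alpha\in\Lambda$, which lie in $C^{\infty}(\mathbb{T}^n)\cap L^2_\Lambda(\mathbb{T}^n)$ and satisfy $Se_\alpha=a_\alpha e_\alpha$.

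I do not expect a serious obstacle. The only point requiring genuine (if brief) verification is the translation-invariance of $L^2_\Lambda(\mathbb{T}^n)$ and of its complement in Step~1, because the hypothesis on $S$ says nothing about behaviour off $L^2_\Lambda(\mathbb{T}^n)$ — that gap is exactly what compels the passage to the zero extension $\bar S$ rather than a direct argument. A minor supporting remark is that $\Lambda$-supported trigonometric polynomials are dense in $L^2_\Lambda(\mathbb{T}^n)$, so the displayed formula does determine $S$ completely, which is what makes the representation useful downstream.
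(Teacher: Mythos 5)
Your proposal is correct and follows essentially the same route as the paper's proof: extend $S$ by zero on $L^2_\Lambda(\mathbb{T}^n)^{\perp}$, check that the extension still commutes with all translations, invoke Theorem \ref{thm 2.12}, and restrict to $\Lambda$-supported smooth functions. If anything, you are more explicit than the paper on the two points it leaves as routine, namely the translation-invariance of $L^2_\Lambda(\mathbb{T}^n)$ and its complement, and the two-sided verification of the norm identity via the monomials $e_\alpha$.
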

\begin{proposition}

    If $R\in B(A^2(\Omega, \omega))$ is a radial operator, then $R$ is unitarily equivalent to a multiplication operator on $\ell^2(\Lambda)$.
\end{proposition}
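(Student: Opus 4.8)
The plan is to conjugate $R$ by the natural unitary identification of $A^2(\Omega,\omega)$ with $L^2_\Lambda(\mathbb{T}^n)$, observe that the resulting operator commutes with all translations, and then read off a multiplication representation from the Lemma just above. Concretely, write $F$ for the Fourier transform restricted to $L^2_\Lambda(\mathbb{T}^n)$ (a unitary onto $\ell^2(\Lambda)$), put $W:=F^{-1}T\colon A^2(\Omega,\omega)\to L^2_\Lambda(\mathbb{T}^n)$, which is unitary, and set $S:=WRW^{*}=F^{-1}TRT^{*}F$, a bounded operator on $L^2_\Lambda(\mathbb{T}^n)$. I would then establish unitary equivalence of $R$ with a multiplication operator on $\ell^2(\Lambda)$ by showing that $S$ is a Fourier multiplier.

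First I would check that $S$ commutes with every translation $\tau_\eta$, $\eta=(\eta_1,\dots,\eta_n)\in\mathbb{R}^n$. Set $\xi=(e^{-i\eta_1},\dots,e^{-i\eta_n})\in\mathbb{T}^n$. By Lemma~\ref{sec 3, lem 1}, $T^{*}F\tau_\eta=V_\xi T^{*}F$, and multiplying on the left by $T$ (using $TT^{*}=I$ on $\ell^2(\Lambda)$) gives $TV_\xi T^{*}=F\tau_\eta F^{-1}$. Since $R$ is radial, $RV_\xi=V_\xi R$, so
\[
S\tau_\eta=F^{-1}TRT^{*}F\tau_\eta=F^{-1}TRV_\xi T^{*}F=F^{-1}TV_\xi RT^{*}F=F^{-1}\bigl(TV_\xi T^{*}\bigr)\bigl(TRT^{*}F\bigr)=\tau_\eta S .
\]
Thus $S$ is a bounded translation-invariant operator on $L^2_\Lambda(\mathbb{T}^n)$.

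By the Lemma preceding this Proposition there is $a=(a_\alpha)_{\alpha\in\Lambda}\in\ell^{\infty}(\Lambda)$ with $(Sf)(\theta)=\sum_{\alpha\in\Lambda}a_\alpha\hat f(\alpha)e^{i\alpha\cdot\theta}$ for all $f\in C^{\infty}(\mathbb{T}^n)\cap L^2_\Lambda(\mathbb{T}^n)$; taking Fourier coefficients, this says precisely $FSf=M_aFf$ for such $f$, where $M_a$ is the multiplication operator on $\ell^2(\Lambda)$. Since the trigonometric polynomials with frequencies in $\Lambda$ lie in $C^{\infty}(\mathbb{T}^n)\cap L^2_\Lambda(\mathbb{T}^n)$ and are dense in $L^2_\Lambda(\mathbb{T}^n)$, and both $FS$ and $M_aF$ are bounded, we get $FS=M_aF$ on all of $L^2_\Lambda(\mathbb{T}^n)$, i.e. $S=F^{-1}M_aF$. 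Substituting back, $R=W^{*}SW=T^{*}F(F^{-1}M_aF)F^{-1}T=T^{*}M_aT$, so $R$ is unitarily equivalent to $M_a$ via $T$. The only real difficulty here is bookkeeping: keeping straight the domains and codomains of $T$, $F$, $V_\xi$ and $\tau_\eta$ through the conjugation, and justifying the passage from the Fourier-multiplier identity on the dense subspace $C^{\infty}(\mathbb{T}^n)\cap L^2_\Lambda(\mathbb{T}^n)$ to all of $L^2_\Lambda(\mathbb{T}^n)$ by boundedness; everything else is formal.
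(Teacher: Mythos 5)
Your proposal is correct and follows essentially the same route as the paper: conjugate $R$ by $F^{-1}T$, use Lemma \ref{sec 3, lem 1} together with radiality to show the conjugated operator commutes with all translations, invoke the preceding lemma to obtain the multiplier $a\in\ell^\infty(\Lambda)$, and conclude $R=T^*M_aT$. Your explicit density argument for extending $FS=M_aF$ from $C^\infty(\mathbb{T}^n)\cap L^2_\Lambda(\mathbb{T}^n)$ to all of $L^2_\Lambda(\mathbb{T}^n)$ is a detail the paper leaves implicit, but otherwise the two arguments coincide.
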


\begin{proof}
   First, we observe that $F^{-1} T R T^* F$ commutes with $\tau_\eta$ for all $\eta \in \mathbb{R}^n$ :
$$
\begin{aligned}
\left(F^{-1} T R T^* F\right) \tau_\eta 
= & \left(F^{-1} T R T^* F\right)\left(F^{-1} T V_{\xi} T^* F\right) \quad \text { [using Lemma \ref{sec 3, lem 1}] } \\
= & F^{-1} T R V_{\xi} T^* F \\
= & F^{-1} T V_{\xi} R T^* F \quad \text { [since } R \text { is radial] } \\
= & \left(F^{-1} T V_{\xi} T^* F\right)\left(F^{-1} T R T^* F\right) \\
= & \tau_\eta\left(F^{-1} T R T^* F\right) .
\end{aligned}
$$

Since $\eta \in \mathbb{R}^n$ is arbitrary, therefore the observation follows.

Let $S:=F^{-1} T R T^* F$. Then by the previous lemma, $\exists\left(a_\alpha\right)_{\alpha \in \Lambda} \in l^{\infty}(\Lambda)$ such that 
\begin{align}
   (S f)(\theta)=\sum_{\alpha \in \Lambda} a_\alpha \hat{f}(\alpha) e^{i \alpha \cdot \theta}, 
\end{align} for all $f \in C^{\infty}\left(\mathbb{T}^n\right)$ $\cap L_\Lambda^2\left(\mathbb{T}^n\right)$.

Let $a := (a_\alpha)_{\alpha \in \Lambda} \in l_\infty(\Lambda)$, and $M_a$ denote the multiplication operator on $l^2(\Lambda)$ with the symbol $a$. Then by the above observations, we have that, $FS = M_a F$, that is, $F(F^{-1} T R T^* F)=M_a F$, which implies that, 
\begin{align}\label{eq 3.4}
    R=T^*M_aT.
\end{align}

Hence the proof.

\end{proof}

\begin{definition}
    Let $\Omega$ be a Reinhardt domain in $\mathbb{C}^n$ and $\omega$ be an admissible and multi-radial weight on $\Omega$. $\Omega$ is said to be \textit{feasible} with respect to $\omega$ if 
        \begin{enumerate}

        \item $\sum_{\alpha \in \Lambda}\|c_\alpha\|_{A^2(\Omega, \omega)^{'}}^2 z^\alpha$ defines an analytic function on $\Omega$. Denote it by $g$.

        \item $g$ has an analytic extension to $\widetilde{\Omega} := \{z \bar{w} : z, w \in \Omega\}.$ 
        \end{enumerate}
\end{definition}

The analytic extension of $g$ to $\widetilde{\Omega}$ will also be denoted by $g$.
\begin{remark}\label{sec3, rem 1}
    
Let $\Omega \subseteq \mathbb{C}^n$ be a feasible domain.
For each $r \in|\Omega|$ and $\alpha \in \Lambda$, let $$S_r(\alpha):=\left\|c_\alpha\right\|_{A^2(\Omega, \omega)^{\prime}}^2 r^\alpha$$ 
Since $\Omega$ is feasible, so 
$({S_r}(\alpha))_{\alpha \in \Lambda} \in S^{\prime}(\Lambda)$ for all $r \in|\Omega|$.
Hence the series $\sum_{\alpha \in \Lambda} S_r(\alpha) e^{i \alpha \cdot \theta}$ converges in the weak-* topology of $D^{\prime}\left(\mathbb{T}^n\right)$, to $K_r$, say, for all $r \in|\Omega|$.
Therefore $\widehat{K}_r(\alpha)=S_r(\alpha)$ for all $r \in|\Omega|$.
\end{remark}

\begin{definition}
    Let $g$ be a function on a feasible domain $\Omega$ and $r= (r_1, r_2, \hdots, r_n) \in |\Omega|$, and $g_r$  be the class of  functions on the unit torus $\mathbb{T}^n$, given by $g_r\left(e^{i \theta_1}, \ldots, e^{i \theta_n}\right):=$ $g\left(r_1 e^{i \theta_1}, \ldots, r_n e^{i \theta_n}\right)$. We say that $g$ is \textit{induced by} $u \in D^{\prime}(\mathbb{T}^n)$ if $g_r$ is the sum of the Fourier series of $K_r * u$, for each $r \in |\Omega|.$ 
\end{definition}

\begin{lemma} \label{induce}
    Let $u \in D^{\prime}(\mathbb{T}^n)$ be such that the Fourier transform of $u$, i.e., $\Hat{u} \in l^\infty(\Lambda)$. If $g$ is induced by $u$ then $g$
    is analytic on $\Omega$.
\end{lemma}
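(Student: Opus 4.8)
The plan is to pin $g$ down as an explicit, locally normally convergent Laurent series on $\Omega$ and then appeal to the standard fact, recalled in Section \ref{SEC:p=3}, that a locally normally convergent Laurent series represents a holomorphic function. (I will write $g_0$ for the analytic function $\sum_{\alpha\in\Lambda}\|c_\alpha\|_{A^2(\Omega,\omega)^{\prime}}^2 z^\alpha$ furnished by feasibility, to avoid a clash with the $g$ induced by $u$.)

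First I would unwind the hypothesis "$g$ is induced by $u$". Fix $r\in|\Omega|$. By the convolution theorem for periodic distributions, $\widehat{K_r\ast u}(\alpha)=\widehat{K_r}(\alpha)\,\widehat{u}(\alpha)$ for all $\alpha\in\mathbb{Z}^n$. By Remark \ref{sec3, rem 1}, $\widehat{K_r}(\alpha)=S_r(\alpha)=\|c_\alpha\|_{A^2(\Omega,\omega)^{\prime}}^2\, r^\alpha$ for $\alpha\in\Lambda$ and $\widehat{K_r}(\alpha)=0$ otherwise; hence $\widehat{K_r\ast u}$ is supported on $\Lambda$ and equals $S_r(\alpha)\widehat{u}(\alpha)$ there. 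Thus, at the level of formal Fourier series, $g_r(e^{i\theta})=\sum_{\alpha\in\Lambda}S_r(\alpha)\,\widehat{u}(\alpha)\,e^{i\alpha\cdot\theta}$.

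Next I would show this series actually converges absolutely and uniformly on $\mathbb{T}^n$ for each fixed $r$. Since $\Omega$ is feasible, $g_0$ is holomorphic on $\Omega$, so its Laurent series converges locally normally: for every compact $K\subset\Omega$ we have $\sum_{\alpha\in\Lambda}\|c_\alpha\|_{A^2(\Omega,\omega)^{\prime}}^2\,\|e_\alpha\|_K<\infty$. Because $\Omega$ is Reinhardt, the torus $\{(r_1e^{i\theta_1},\dots,r_ne^{i\theta_n}):\theta\in\mathbb{R}^n\}$ is a compact subset of $\Omega$ for every $r\in|\Omega|$, on which $|e_\alpha|\equiv r^\alpha$, so $\sum_{\alpha\in\Lambda}\|c_\alpha\|_{A^2(\Omega,\omega)^{\prime}}^2 r^\alpha<\infty$. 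Using $\widehat{u}\in l^\infty(\Lambda)$,
$$\sum_{\alpha\in\Lambda}\bigl|S_r(\alpha)\,\widehat{u}(\alpha)\bigr|\le \|\widehat{u}\|_{l^\infty(\Lambda)}\sum_{\alpha\in\Lambda}\|c_\alpha\|_{A^2(\Omega,\omega)^{\prime}}^2 r^\alpha<\infty,$$
so the Fourier series of $K_r\ast u$ converges absolutely and uniformly, its sum $g_r$ is continuous on $\mathbb{T}^n$, and $g_r(e^{i\theta})=\sum_{\alpha\in\Lambda}\widehat{u}(\alpha)\|c_\alpha\|_{A^2(\Omega,\omega)^{\prime}}^2 r^\alpha e^{i\alpha\cdot\theta}$. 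Reassembling the slices over $r\in|\Omega|$ yields $g(z)=\sum_{\alpha\in\Lambda}\widehat{u}(\alpha)\,\|c_\alpha\|_{A^2(\Omega,\omega)^{\prime}}^2\,z^\alpha$ for all $z\in\Omega$.

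Finally, for any compact $K\subset\Omega$,
$$\sum_{\alpha\in\Lambda}\bigl|\widehat{u}(\alpha)\bigr|\,\|c_\alpha\|_{A^2(\Omega,\omega)^{\prime}}^2\,\|e_\alpha\|_K\le \|\widehat{u}\|_{l^\infty(\Lambda)}\sum_{\alpha\in\Lambda}\|c_\alpha\|_{A^2(\Omega,\omega)^{\prime}}^2\,\|e_\alpha\|_K<\infty,$$
so the Laurent series for $g$ converges normally on every compact subset of $\Omega$; being a locally normally convergent Laurent series, $g$ is holomorphic on $\Omega$. The only genuinely delicate bookkeeping is verifying that the torus-slices $g_r$ assemble into a single Laurent series on $\Omega$: this is where one uses that $\Omega$ is Reinhardt (each slice lies in $\Omega$) and that the $e_\alpha$ with $\alpha\in\Lambda$ are honest single-valued holomorphic monomials on $\Omega$. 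Everything else is a direct application of the convolution theorem, the feasibility hypothesis, and the boundedness of $\widehat{u}$.
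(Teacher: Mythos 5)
Your proof is correct and follows essentially the same route as the paper's: apply the convolution theorem to get $\widehat{K_r\ast u}(\alpha)=S_r(\alpha)\widehat u(\alpha)$ and then sum the resulting Laurent series. You are in fact more careful than the paper, which only asserts pointwise absolute convergence before concluding analyticity, whereas you correctly upgrade this to normal convergence on compact subsets of $\Omega$ via the feasibility hypothesis and $\widehat u\in l^\infty(\Lambda)$, which is what analyticity actually requires.
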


\begin{proof}
    Since $g_r$ is the sum of the Fourier series of the distribution $K_r * u$, therefore we have that,
    \begin{align*} 
    g(z) = g_r\left(e^{i \theta_1}, \ldots, e^{i \theta_n}\right) & =\sum_{\alpha \in \Lambda} \widehat{(K_r * u)}(\alpha) e^{i \alpha \cdot \theta} \\
    & = \sum_{\alpha \in \Lambda} \widehat{K_r}(\alpha) \hat{u}(\alpha) e^{i \alpha \cdot \theta}\\
    & = \sum_{\alpha \in \Lambda} S_r(\alpha) \hat{u}(\alpha) e^{i \alpha \cdot \theta}
    \end{align*},
    the series is pointwise absolutely convergent, hence the result.
\end{proof}

Now we state and prove our main result of this subsection:

\begin{theorem}\label{Main theorem}
   Let $\Omega$ be a feasible domain with respect to an admissible multi-radial weight $\omega$. Then $R: A^2(\Omega, \omega) \rightarrow A^2(\Omega, \omega)$ is a bounded radial operator if and only if there exists an analytic function $g$ induced by $u$ $\in D^{\prime}(\mathbb{T}^ n)$ with $\hat{u} \in l^{\infty}(\Lambda)$ such that
    $${Rf}(z)=\int_{\Omega} f(w) g(z \bar{w}) \omega d V(w), \mbox{~for~ all~} f \in  A^2(\Omega, \omega).$$
\end{theorem}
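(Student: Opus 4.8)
The plan is to leverage the structural decomposition $R = T^* M_a T$ established in the previous proposition, and to recognize the right-hand side of the claimed formula as exactly this operator rewritten as an integral. First I would prove the forward direction. Given a bounded radial operator $R$, the proposition yields a bounded sequence $a = (a_\alpha)_{\alpha \in \Lambda} \in \ell^\infty(\Lambda)$ with $R = T^* M_a T$. By Theorem 2.14 (equivalently Theorem 3.172 of \cite{iorio2001fourier}), since $a \in \ell^\infty(\Lambda) \subseteq \mathcal{S}'(\Lambda) \subseteq \mathcal{S}'(\mathbb{Z}^n)$, there is a unique $u \in \mathcal{D}'(\mathbb{T}^n)$ with $\hat{u} = a$ (extended by zero off $\Lambda$), so $\hat u \in \ell^\infty(\Lambda)$. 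Then I would define $g$ to be the function induced by $u$ in the sense of the Definition preceding Lemma \ref{induce}; Lemma \ref{induce} guarantees $g$ is analytic on $\Omega$, and feasibility gives the analytic extension to $\widetilde{\Omega}$, so $g$ is a legitimate candidate. The crux is then the computation: unwind $T^* M_a T f(z)$ using the explicit formulas (3.1) and (3.2) for $T$ and $T^*$ to get
$$
(T^* M_a T f)(z) = \sum_{\alpha \in \Lambda} \|c_\alpha\|_{A^2(\Omega,\omega)'}^2\, a_\alpha \left(\int_\Omega f(w)\bar{w}^\alpha\, \omega\, dV(w)\right) z^\alpha,
$$
and separately expand $\int_\Omega f(w) g(z\bar w)\, \omega\, dV(w)$ by inserting the series $g(z\bar w) = \sum_{\alpha \in \Lambda} \|c_\alpha\|^2_{A^2(\Omega,\omega)'}\, a_\alpha\, (z\bar w)^\alpha$ — which is precisely the formula for $g$ on $\widetilde\Omega$ coming from Remark \ref{sec3, rem 1} and the Definition of "induced by", since $\widehat{K_r}(\alpha) = S_r(\alpha) = \|c_\alpha\|^2_{A^2(\Omega,\omega)'} r^\alpha$ and $\hat u(\alpha) = a_\alpha$ — then interchange sum and integral. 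Both sides agree term by term, establishing the representation.

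For the converse, I would start from a function $g$ induced by some $u \in \mathcal{D}'(\mathbb{T}^n)$ with $\hat u \in \ell^\infty(\Lambda)$, define $R$ by the integral formula, and show it equals $T^* M_{\hat u} T$ by running the same term-by-term computation in reverse; since $\hat u \in \ell^\infty(\Lambda)$ the multiplication operator $M_{\hat u}$ is bounded on $\ell^2(\Lambda)$ with norm $\|\hat u\|_\infty$, hence $R$ is bounded, and $R = T^* M_{\hat u} T$ is radial because (tracing back through the proposition's argument, or directly) $M_{\hat u}$ commutes with the conjugated translations, equivalently $R$ commutes with each $V_\xi$. To see the last point cleanly, I would note $V_\xi = T^* (F^{-1} \tau_\eta F) T$ from Lemma \ref{sec 3, lem 1}, and $M_{\hat u}$ intertwines with $F^{-1}\tau_\eta F$ because Fourier-multiplier operators commute with translations (Theorem \ref{thm 2.12}).

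The main obstacle I anticipate is justifying the interchange of summation and integration in the key computation, together with making sure the series manipulations are valid pointwise for $z \in \Omega$. This is where feasibility and the condition $\hat u \in \ell^\infty(\Lambda)$ do the real work: for fixed $z$, writing $z = r e^{i\theta}$ componentwise, the series $\sum_\alpha \|c_\alpha\|^2 \hat u(\alpha) (z\bar w)^\alpha$ must converge absolutely and uniformly for $w$ in compact subsets of $\Omega$, which follows because $(\|c_\alpha\|^2 |z|^\alpha |w|^\alpha)_\alpha$ is rapidly decaying — this is the content of the domain $\widetilde\Omega$ being a Reinhardt domain on which $g$ is analytic, combined with $(\|c_\alpha\|^2_{A^2(\Omega,\omega)'} r^\alpha)_\alpha \in \mathcal{S}'(\Lambda)$ from Remark \ref{sec3, rem 1} — while $|\hat u(\alpha)|$ is merely bounded and so does not disturb the decay. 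Granting this, Fubini/Tonelli and the orthogonality relations $\int_\Omega \bar w^\alpha w^\beta\, \omega\, dV = \|e_\alpha\|_{2,\omega}^2 \delta_{\alpha\beta}$ (valid for $\alpha, \beta \in \Lambda$) finish both directions simultaneously.
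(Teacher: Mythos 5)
Your proposal is correct and follows essentially the same route as the paper: both directions rest on the decomposition $R = T^* M_a T$, the identification of $a$ with $\hat{u}$ for a periodic distribution $u$, and the term-by-term matching of $T^* M_a T f$ with the integral against $g(z\bar{w})$ (the paper, like you, invokes dominated convergence for the sum--integral interchange). The only divergence is in the converse, where the paper verifies radiality by a direct change of variables $w \mapsto \lambda w$ in the integral, using multi-radiality of $\omega$ and the fact that the real Jacobian equals $1$, whereas you deduce it structurally from $V_\xi = T^* F^{-1}\tau_\eta F T$ and the commutation of Fourier multipliers with translations --- both are valid.
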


\begin{proof}
   To prove the only if part of the statement, let us assume that $R$ is a bounded radial operator on $A^2(\Omega, \omega)$. Then the equation \ref{eq 3.4} implies that $R=T^* M_a T$, where $M_a$ denotes the multiplication operator with the symbol $a \in l^{\infty}(\Lambda)$. Since $l^{\infty}(\Lambda) \subseteq S^{\prime}(\Lambda)$, therefore there exists a unique $u \in D^{\prime}\left(\mathbb{T}^n\right)$ such that $\hat{u}=a$.

Now for $f \in A^2(\Omega, \omega)$, we have the following:
$$
\begin{aligned}
& R f(z) \\
= & \left(T^* M_a T\right)(f)(z) \\
= & \sum_{\alpha \in \Lambda}\|c_\alpha\|_{A^2(\Omega, \omega)^{\prime}}(M_a T f)(\alpha) z^\alpha . \\
= & \sum_{\alpha \in \Lambda}\|c_\alpha\|_{A^2(\Omega, \omega)^{\prime}} a(\alpha) T f(\alpha) z^\alpha \\
= & \sum_{\alpha \in \Lambda}\|c_\alpha\|_{A^2(\Omega, \omega)^{\prime}} a(\alpha)\left\|c_\alpha\right\|_{A^2(\Omega, \omega)} \int_{\Omega} f(w) \bar{w}^\alpha \omega d V(w) z^\alpha \\
= & \int_{\Omega} f(w) \sum_{\alpha \in \Lambda}\left\|c_\alpha\right\|_{A^2(\Omega, \omega)^{\prime}}^2 a(\alpha) \bar{w}^\alpha z^\alpha \omega d V(w)
\end{aligned}
$$

The interchange of the sum and integral in the last equality is justified by the use of the dominated convergence theorem.

Let $g(z):=\sum_{\alpha \in \Lambda}\|c_\alpha\|_{A^2(\Omega, \omega)^{'}}^2 a(\alpha) z^\alpha, z \in \Omega$. Then $g$ is an analytic function on $\Omega$. Since $\hat{u}=a$, therefore from the calculation in the proof of Lemma \ref{induce} , it follows that
$$
\begin{aligned}
& g(z)=g\left(r_1 e^{i \theta_1}, \ldots, r_n e^{i \theta_n}\right)=g_r\left(e^{i \theta_1}, \ldots, e^{i \theta n}\right)=\sum_{\alpha \in \Lambda} \widehat{(K_r * u)}(\alpha) e^{i \alpha \cdot \theta} \text {, } \\
\end{aligned}
$$
where $r \in |\Omega|$ and $\left(e^{i \theta_1}, \ldots, e^{i \theta_n}\right) \in \mathbb{T}^n$ .
Therefore, it follows that $u \in D^{\prime}\left(\mathbb{T}^n\right)$ induces the analytic function $g$ on $\Omega$ and since $\omega$ is feasible so we have,
$$
R f(z)=\int_{\Omega} f(w) g(z \bar{w}) \omega d V(w) \text { for all } f \in A^2(\Omega, \omega) \text {. }
$$ 

In order to prove that the stated condition in the statement is sufficient, \linebreak let us assume that $R f(z)=\int_{\Omega} f(w) g(z \bar{w}) d V(w)$, where $g \in \mathcal{O}(\Omega)$ be such that $g$ is induced by $u \in D^{\prime}\left(\mathbb{T}^n\right)$ with $\hat{u} \in l^{\infty}(\Lambda)$. From the calculation of the only if part, it follows that, $R=T^* M_{\hat{u}} T$. Since $M_{\hat{u}}$ is a bounded linear operator, therefore so is $R$.
Now we prove that $R$ is a radial operator on $A^2(\Omega, \omega)$. Let $\lambda=\left(\lambda_1, \lambda_2, \ldots, \lambda_n\right) \in \mathbb{T}^n$. Then
$$
\begin{aligned}
\left(R V_\lambda\right)(f)(z) & =\int_{\Omega}\left(V_\lambda f\right)(w) g(z \bar{w}) \omega d V(w) \\
& =\int_{\Omega} f(\lambda w) g(z \bar{w}) \omega d V(w)
\end{aligned}
$$

Applying the change of variable $u=\lambda w$, we get that the complex Jacobian of the change of variable is $\lambda_1 \lambda_2 \cdots \lambda_n$. Hence the real Jacobian, being the square of the absolute value of the complex Jacobian, is 1. Since $\omega$ is multi-radial, therefore we have,
$$
\begin{aligned}
& \left(R V_\lambda\right)(f)(z)=\int_{\Omega} f(u) g(\lambda z \bar{u}) \omega d V(u) .
\end{aligned}
$$

On the other hand, 
$$(V_\lambda R)(f)(z)=(R f)(\lambda z)=\int_{\Omega} f(w) g(\lambda z \bar{w}) \omega d V(w).$$
Since $\lambda \in \mathbb{T}^n$ was chosen arbitrarily, therefore we have that, $R V_\lambda=V_\lambda R$ for all $\lambda \in \mathbb{T}^n$. Hence $R$ is a radial operator.

\end{proof}


	\section{Example of a von Neumann algebra in $\mathcal{O}(\Omega)$} \label{SEC: p=4}
 Let us first explore some operator theoretic properties of bounded radial operators on $A^2(\Omega, \omega)$. Let $R$ be a radial operator on $A^2(\Omega, \omega)$. Then by utilising the proof of Theorem \ref{Main theorem}, it follows that, there exists a periodic distribution $u \in D^{\prime}\left(\mathbb{T}^n\right)$ with $\hat{u} \in l^{\infty}(\Lambda)$, such that $R$ is unitarily equivalent to the multiplication operator $M_{\hat{u}}$. Hence the statements $(1)-(5)$ of the following theorem hold by using the operator theoretic properties of multiplication operator on $l^2(\Lambda)$ with the symbol $\hat{u}$ listed in Theorem \ref{multiplication}, whereas the statement $(6)$ follows from \cite[Theorem 2.5]{shapiro2004notes}.

\begin{theorem}
(1) $R$ is normal.

(2) $\sigma(R)=\sigma_{app}(R)=\overline{\operatorname{range}(\hat{u})}$.

(3) $\lambda \in \sigma_p(R)$ if and only if $\hat{u}(\alpha)=\lambda$ for some $\alpha \in \Lambda$.

(4) $R$ is compact if and only if $(\hat{u}(\alpha))_{\alpha \in \Lambda}$ vanishes at infinity.

(5) $R$ is of finite rank if and only if $\hat{u}(\alpha) = 0$ for all but finitely many $\alpha \in \Lambda$.

(6) The numerical range of $R$ is the convex hull of $\sigma_p(R)$.

\end{theorem}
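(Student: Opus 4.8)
The plan is to exploit the unitary equivalence already recorded in the proof of Theorem~\ref{Main theorem}: every bounded radial operator $R$ on $A^2(\Omega,\omega)$ satisfies $R=T^*M_{\hat u}T$ (this is precisely equation \ref{eq 3.4}), where $T$ is the unitary operator introduced just before Lemma~\ref{sec 3, lem 1} and $\hat u\in l^{\infty}(\Lambda)$. Since $T$ is unitary, $R$ and $M_{\hat u}$ are unitarily equivalent operators on $A^2(\Omega,\omega)$ and $\ell^2(\Lambda)$ respectively, and \emph{all} of the invariants appearing in (1)--(6) are preserved under unitary equivalence. So the entire theorem is a translation exercise: transport the corresponding facts about $M_{\hat u}$ through $T$.

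First I would record the general principle to be used repeatedly: if $U\colon H_1\to H_2$ is unitary and $A=U^*BU$, then from $(A-\lambda I)v=U^*(B-\lambda I)Uv$ one reads off at once that $A$ is invertible iff $B$ is (hence $\sigma(A)=\sigma(B)$), that $\sigma_p(A)=\sigma_p(B)$ and $\sigma_{app}(A)=\sigma_{app}(B)$ (as $U$ maps unit vectors to unit vectors), that $A^*=U^*B^*U$, so $A$ is normal iff $B$ is, that $A$ is compact (resp.\ finite rank) iff $B$ is, and that $W(A)=W(B)$, since $\langle Av,v\rangle=\langle B(Uv),Uv\rangle$ and $\|Uv\|=\|v\|$. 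None of this requires anything beyond the definitions collected in Section~\ref{SEC:preliminaries}.

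With this in hand, statements (1)--(5) follow verbatim from the corresponding items of Theorem~\ref{multiplication} applied with $a=\hat u$ and index set $\Lambda$: items (1)--(2) (giving $M_{\hat u}^*M_{\hat u}=M_{|\hat u|^2}=M_{\hat u}M_{\hat u}^*$) yield normality; item (4) yields $\sigma(R)=\sigma_{app}(R)=\overline{\operatorname{range}(\hat u)}$; item (5) yields the description of $\sigma_p(R)$; items (6)--(7) yield the compactness and finite-rank criteria. For statement (6), once $W(R)=W(M_{\hat u})$ is known, it remains to identify the numerical range of a multiplication operator on $\ell^2(\Lambda)$; by \cite[Theorem 2.5]{shapiro2004notes} this equals the convex hull of its point spectrum, which by (3) is $\sigma_p(R)$, giving exactly the asserted formula.

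I do not expect a genuine obstacle: the only points deserving a line of care are verifying that the numerical range is truly a unitary invariant (immediate, as noted above) and checking that Theorem~\ref{multiplication} and \cite[Theorem 2.5]{shapiro2004notes}, stated for multiplication operators on a sequence space over an abstract index set, apply with $\Lambda$ in place of the ambient set without any change. In short, the proof is simply: invoke $R=T^*M_{\hat u}T$, note unitary invariance of each property, and quote Theorem~\ref{multiplication} for (1)--(5) and \cite[Theorem 2.5]{shapiro2004notes} for (6).
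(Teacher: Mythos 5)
Your proposal is correct and is exactly the argument the paper gives: the paper also invokes the unitary equivalence $R=T^*M_{\hat u}T$ from the proof of Theorem~\ref{Main theorem}, transports properties (1)--(5) from Theorem~\ref{multiplication}, and cites \cite[Theorem 2.5]{shapiro2004notes} for the numerical range statement (6). Your explicit remarks on why each invariant is preserved under unitary equivalence merely flesh out what the paper leaves implicit.
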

Also the following results hold:
\begin{theorem}
(1) The set of all radial operators on $A^2(\Omega, \omega)$ forms a maximal abelian $C^*$ - subalgebra of $B\left(A^2(\Omega, \omega)\right)$.\\
(2) A closed subspace of $A^2(\Omega, w)$ is a common reducing subspace for all radial operators on $A^2(\Omega, \omega)$ if and only if it is of the form $R\left(A^2(\Omega, \omega)\right)$, where $R=T^* M_a T$ with $a=\chi_E$ for some subset $E \subseteq \Lambda$.
\end{theorem}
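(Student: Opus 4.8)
\emph{Proof proposal.} The plan is to transport both statements through the unitary operator $T$ and reduce them to the already-established facts about the multiplication algebra $\mathcal{M}(\ell^2(\Lambda))$ recorded in Theorem \ref{multiplication}. The one structural fact I would isolate first is: an operator $R \in B(A^2(\Omega,\omega))$ is radial if and only if $R = T^* M_a T$ for some $a \in \ell^\infty(\Lambda)$. The ``only if'' is the preceding Proposition (equivalently \eqref{eq 3.4}), and the ``if'' is exactly the sufficiency half of the proof of Theorem \ref{Main theorem}, where $T^* M_a T$ is shown to commute with every $V_\lambda$. Writing $\mathcal{R}$ for the set of all radial operators, this says precisely that $\mathcal{R} = T^* \mathcal{M}(\ell^2(\Lambda)) T$, so conjugation by the unitary $T$ is an isometric $*$-isomorphism of $\mathcal{M}(\ell^2(\Lambda))$ onto $\mathcal{R}$; in particular $\mathcal{R}$ is norm-closed and is a commutative $C^*$-subalgebra of $B(A^2(\Omega,\omega))$.

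For part (1), I would use that conjugation by the unitary $T$ is a $*$-automorphism of the relevant operator algebras carrying commutants to commutants, so $T \mathcal{R}' T^* = \mathcal{M}(\ell^2(\Lambda))'$. By Theorem \ref{multiplication}(3) the algebra $\mathcal{M}(\ell^2(\Lambda))$ is maximal commutative in $B(\ell^2(\Lambda))$, hence equals its own commutant, and pulling this back gives $\mathcal{R}' = \mathcal{R}$. A commutative $C^*$-subalgebra equal to its own commutant is maximal abelian, which is the assertion. (Equivalently: any abelian $C^*$-subalgebra of $B(A^2(\Omega,\omega))$ properly containing $\mathcal{R}$ would, after conjugating by $T$, be an abelian $C^*$-subalgebra properly containing $\mathcal{M}(\ell^2(\Lambda))$, contradicting Theorem \ref{multiplication}(3).)

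For part (2), let $\mathcal{K} \subseteq A^2(\Omega,\omega)$ be a closed subspace with orthogonal projection $P_{\mathcal{K}}$. Since $\mathcal{R}$ is self-adjoint, $\mathcal{K}$ is a common reducing subspace for all radial operators if and only if $P_{\mathcal{K}}$ commutes with every element of $\mathcal{R}$, i.e. $P_{\mathcal{K}} \in \mathcal{R}' = \mathcal{R}$ by part (1). Thus $P_{\mathcal{K}} = T^* M_a T$ for some $a \in \ell^\infty(\Lambda)$; as $P_{\mathcal{K}}$ is a projection and $T$ is unitary, $M_a$ is a projection on $\ell^2(\Lambda)$, which forces $a(m) \in \{0,1\}$ for all $m$, i.e. $a = \chi_E$ with $E := \{m \in \Lambda : a(m) = 1\}$. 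Setting $R := T^* M_{\chi_E} T$ (a radial operator), we get $\mathcal{K} = \operatorname{range}(P_{\mathcal{K}}) = \operatorname{range}(R) = R(A^2(\Omega,\omega))$. Conversely, if $\mathcal{K} = R(A^2(\Omega,\omega))$ with $R = T^* M_{\chi_E} T$, then $R$ is a self-adjoint idempotent radial operator, so $R = P_{\mathcal{K}} \in \mathcal{R}$ commutes with every radial operator, whence $\mathcal{K}$ reduces all of them.

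I expect the only genuinely load-bearing points — and hence the places to be careful — to be two: first, establishing that \emph{every} radial operator (not just those visibly of the form $T^* M_a T$) has this form, which is why the Proposition together with the sufficiency direction of Theorem \ref{Main theorem} must be invoked; and second, the identity $\mathcal{M}(\ell^2(\Lambda))' = \mathcal{M}(\ell^2(\Lambda))$, which is precisely maximal commutativity in Theorem \ref{multiplication}(3). Everything else is a routine transport of structure (commutants, projections, ranges) along the unitary $T$, and I would keep those verifications brief.
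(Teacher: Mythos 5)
Your proposal is correct and is essentially the argument the paper intends: the paper gives no details, merely citing the analogous result \cite[Theorem 4.1]{mohan2024integral}, and the intended proof is exactly your transport of Theorem \ref{multiplication}(3) and of the projection characterization through the unitary $T$, using the identification $\mathcal{R}=T^*\mathcal{M}\left(\ell^2(\Lambda)\right)T$ of the radial operators with the multiplication algebra. One small refinement: to see that every $T^*M_aT$ is radial without invoking the feasibility hypothesis of Theorem \ref{Main theorem}, note from Lemma \ref{sec 3, lem 1} that $V_\xi=T^*F\tau_\eta F^{-1}T$ and that $F\tau_\eta F^{-1}$ is itself a multiplication operator on $\ell^2(\Lambda)$ (by the sequence $\alpha\mapsto e^{-i\eta\cdot\alpha}$), hence commutes with $M_a$.
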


\begin{proof}
    The proofs are similar to that of \cite[Theorem 4.1]{mohan2024integral}.
\end{proof}

Now we give an example of a von Newman algebra of analytic functions on an admissible Reinhardt domain $\Omega$.

A radial operator on $A^2(\Omega, \omega)$ of the form 
$$f \mapsto \int_{\Omega} f(w) g(z \bar{w}) \omega d V(w),$$ for some $g \in \mathcal{O}(\Omega)$, will be denoted by $R_g.$
For $g \in \mathcal{O}(\Omega)$, define $$g^*(z):=\overline{g(\bar{z})}  \mbox{ for all } z \in \Omega,$$ then from the Laurent series expansion of $g^*$, it follows that $g^*$ is also analytic on $\Omega$.

Let $$S:=\{g \in \mathcal{O}(\Omega): R_g \text{ is radial and bounded} \}, \text{ and }
S_R:=\left\{R_g: {R_g} \in B\left(A^2(\Omega, \omega)\right)\right\}.$$

\begin{lemma}
    
 Let $R_g \in S_R$. Then $\left(R_g\right)^*=R_{g^*}$.
 \end{lemma}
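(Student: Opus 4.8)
The plan is to establish the identity $(R_g)^* = R_{g^*}$ by reducing it, via the unitary equivalence developed in Section~\ref{SEC:p=3}, to a statement about adjoints of multiplication operators on $\ell^2(\Lambda)$, where Theorem~\ref{multiplication}(1) gives $M_a^* = M_{\bar a}$. Concretely, from the proof of Theorem~\ref{Main theorem} we know that $R_g = T^* M_a T$ where $a = \hat u$ and $g(z) = \sum_{\alpha \in \Lambda} \|c_\alpha\|_{A^2(\Omega,\omega)'}^2\, a(\alpha)\, z^\alpha$. Taking adjoints and using that $T$ is unitary, $(R_g)^* = T^* M_a^* T = T^* M_{\bar a} T$, which is again a radial operator of the same type, hence equals $R_h$ for the analytic function $h(z) = \sum_{\alpha \in \Lambda} \|c_\alpha\|_{A^2(\Omega,\omega)'}^2\, \overline{a(\alpha)}\, z^\alpha$. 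So the whole content of the lemma is the claim that this $h$ coincides with $g^*$.

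First I would record the Laurent coefficients of $g^*$. Writing $g(z) = \sum_\alpha b_\alpha z^\alpha$ with $b_\alpha = \|c_\alpha\|_{A^2(\Omega,\omega)'}^2\, a(\alpha)$, one has $g^*(z) = \overline{g(\bar z)} = \overline{\sum_\alpha b_\alpha \bar z^\alpha} = \sum_\alpha \overline{b_\alpha}\, z^\alpha$ (valid because $z^\alpha$ has real — indeed integer — exponents, so $\overline{\bar z^\alpha} = z^\alpha$, and the Laurent series converges locally normally so conjugation commutes with the sum). Since $\|c_\alpha\|_{A^2(\Omega,\omega)'}^2$ is a positive real number, $\overline{b_\alpha} = \|c_\alpha\|_{A^2(\Omega,\omega)'}^2\, \overline{a(\alpha)}$, which is exactly the $\alpha$-th coefficient of $h$. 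Hence $h = g^*$ as analytic functions on $\Omega$, and therefore $(R_g)^* = R_h = R_{g^*}$.

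The step requiring the most care is the bookkeeping around which objects are assumed analytic where: one must check $g^*$ is induced by a periodic distribution $u^*$ with $\widehat{u^*} \in \ell^\infty(\Lambda)$, so that $R_{g^*}$ is genuinely a well-defined bounded radial operator and Theorem~\ref{Main theorem} applies to it. This follows because $\widehat{u^*} = \overline{\hat u} = \bar a \in \ell^\infty(\Lambda)$ (complex conjugation preserves boundedness), and on the torus $g^*_r$ is the sum of the Fourier series of $K_r * u^*$ since $\widehat{K_r}$ is real-valued and $\widehat{(K_r * u^*)}(\alpha) = \widehat{K_r}(\alpha)\overline{\hat u(\alpha)} = \overline{\widehat{K_r}(\alpha)\hat u(\alpha)} = \overline{\widehat{(K_r*u)}(\alpha)}$, matching the coefficients of $g^*_r$ computed above. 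Once this is in place the identification $(R_g)^* = R_{g^*}$ is immediate from $M_a^* = M_{\bar a}$ and the unitarity of $T$; no genuine obstacle remains beyond this routine verification.
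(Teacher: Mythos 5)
Your proof is correct, but it takes a genuinely different route from the paper. The paper proves the lemma by a direct computation of $\langle R_g f, h\rangle_{A^2(\Omega,\omega)}$: it writes out the double integral, applies Fubini, and uses the identity $\overline{g(w\bar u)} = g^*(u\bar w)$ to read off the adjoint as the integral operator with kernel $g^*(u\bar w)$. You instead pass through the unitary equivalence $R_g = T^* M_a T$ established in the proof of Theorem \ref{Main theorem}, invoke $M_a^* = M_{\bar a}$ from Theorem \ref{multiplication}, and then identify the analytic function with Laurent coefficients $\|c_\alpha\|_{A^2(\Omega,\omega)'}^2\,\overline{a(\alpha)}$ with $g^*$ by conjugating the Laurent series term by term. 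Your approach buys a cleaner argument that sidesteps the Fubini interchange the paper leaves implicit, and it has the additional merit of explicitly checking that $g^*$ is induced by a periodic distribution $u^*$ with $\widehat{u^*}=\overline{\hat u}\in\ell^\infty(\Lambda)$, so that $R_{g^*}$ is a well-defined element of $S_R$ — a point the paper's computation takes for granted. The paper's direct computation, on the other hand, exhibits the adjoint concretely as an integral operator without appealing to the structure theorem, which is arguably more self-contained at this point in the exposition. Both arguments are sound.
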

Let $f, h \in A^2(\Omega, w)$. Then
$$
\begin{aligned}
\langle R_g f, h\rangle_{A^2(\Omega, \omega)} & =\int_{\Omega}(R_g f)(w) \overline{h(w)} \omega d V(w) \\
& =\int_{\Omega} \int_{\Omega} f(u) g(w \bar{u}) \omega d V(u) \overline{h(w)} \omega d V(w) \\
& =\int_{\Omega} f(u) \overline{\int_{\Omega} h(w) g^*(u \bar{w}) \omega d V(w) \omega dV(u)}
\end{aligned}
$$

Therefore, $$(R_g)^*(f)(z)=\int_{\Omega} f(w) g^*\left(u\bar{w}) \omega d V(w),  \forall f \in A^2(\Omega, \omega)\right.$$ Hence $(R_g)^*=R_{g^*}$.

\begin{lemma}\label{Lemma 4.4}
    Let $g_1, g_2 \in S$. Then $R_{g_1}=R_{g_2}$ if and only if $g_1=g_2$.
\end{lemma}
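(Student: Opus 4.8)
The plan is to reduce the statement to the injectivity of the map sending a bounded symbol to its multiplication operator, which we already have in hand from the discussion following equation \eqref{eq 3.4}. The backward implication $g_1 = g_2 \implies R_{g_1} = R_{g_2}$ is immediate from the definition of $R_g$, so the content is the forward direction. First I would invoke Theorem \ref{Main theorem}: since $R_{g_1}, R_{g_2} \in S_R$ are bounded radial operators, there exist periodic distributions $u_1, u_2 \in D^{\prime}(\mathbb{T}^n)$ with $\hat{u}_1, \hat{u}_2 \in l^\infty(\Lambda)$ inducing $g_1$ and $g_2$ respectively, and the proof of that theorem shows $R_{g_j} = T^* M_{\hat{u}_j} T$ for $j = 1,2$. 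Assuming $R_{g_1} = R_{g_2}$ and using that $T$ is unitary, we conclude $M_{\hat{u}_1} = M_{\hat{u}_2}$ as operators on $l^2(\Lambda)$; by Theorem \ref{multiplication}(1)--(2) (or the $*$-isometric isomorphism $l^\infty(\Lambda) \to \mathcal{M}(l^2)$ recorded in Definition 2.10), this forces $\hat{u}_1(\alpha) = \hat{u}_2(\alpha)$ for every $\alpha \in \Lambda$.

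The second step is to push this coefficient equality back to an equality of the functions $g_1, g_2$ on $\Omega$. From the formula for $g$ derived in the proof of Theorem \ref{Main theorem} (equivalently from Lemma \ref{induce}), each $g_j$ has the Laurent expansion
$$
g_j(z) = \sum_{\alpha \in \Lambda} \|c_\alpha\|_{A^2(\Omega, \omega)^{\prime}}^2 \, \hat{u}_j(\alpha) \, z^\alpha, \qquad z \in \Omega.
$$
Since $\|c_\alpha\|_{A^2(\Omega, \omega)^{\prime}} = 1/\|e_\alpha\|_{2,\omega} \neq 0$ for each $\alpha \in \Lambda$, the condition $\hat{u}_1(\alpha) = \hat{u}_2(\alpha)$ for all $\alpha \in \Lambda$ gives that the two Laurent series have identical coefficients. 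By the uniqueness of the Laurent expansion of a holomorphic function on the Reinhardt domain $\Omega$ (the coordinate functionals $c_\alpha$ of Theorem in the previous section), we obtain $g_1 = g_2$ on $\Omega$, and hence on $\widetilde{\Omega}$ by uniqueness of analytic continuation.

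I do not anticipate a serious obstacle here; the lemma is essentially a bookkeeping consequence of two facts already proved, namely the unitary equivalence $R_g \cong M_{\hat u}$ and the faithfulness of $a \mapsto M_a$. The one point that needs a little care is making sure the passage from $\hat{u}_1 = \hat{u}_2$ on $\Lambda$ to $g_1 = g_2$ uses only the coefficients indexed by $\Lambda$ — but this is exactly what the Laurent expansion of an element of $A^2(\Omega,\omega)$ (and more generally of the induced analytic function $g$) supplies, since all coefficients outside $\Lambda$ vanish. So the argument is: $R_{g_1} = R_{g_2} \implies M_{\hat u_1} = M_{\hat u_2} \implies \hat u_1 = \hat u_2 \text{ on } \Lambda \implies g_1 = g_2$, and the converse is trivial.
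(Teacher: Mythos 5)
Your argument is correct in substance but follows a genuinely different route from the paper. The paper's proof never passes through the multiplier picture: it sets $h:=g_1-g_2$, applies $R_{h}$ to the reproducing kernels $K_u$, uses the reproducing property to obtain $(R_{h}K_u)(z)=\overline{h^*(\bar z u)}$, and then invokes the identity theorem to conclude $h^*\equiv 0$. Your proof instead goes through the unitary equivalence $R_{g_j}=T^*M_{\hat u_j}T$, the faithfulness of $a\mapsto M_a$, and the uniqueness of Laurent coefficients. Both work, and yours makes the ``bookkeeping'' nature of the lemma transparent; the paper's version has the advantage of acting directly on the functions and pinning down $g_1-g_2$ on all of $\widetilde\Omega$ without any reference to the inducing distributions.

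One step of yours needs care: you obtain $u_1,u_2$ inducing $g_1,g_2$ by ``invoking Theorem \ref{Main theorem}.'' As literally stated, the only-if direction of that theorem applied to the bounded radial operator $R_{g_1}$ produces \emph{some} induced function $\tilde g_1$ with $R_{g_1}=R_{\tilde g_1}$; concluding $\tilde g_1=g_1$ is exactly Lemma \ref{Lemma 4.4}, so this reading is circular. The gap is harmless under the paper's implicit convention (the text immediately after the lemma takes elements of $S$ to be induced by distributions, equivalently to have Laurent support in $\Lambda$), and it can also be closed without that convention by computing directly from the Laurent expansion $g_j=\sum_\alpha b^{(j)}_\alpha e_\alpha$ that $R_{g_j}e_\beta=b^{(j)}_\beta\|e_\beta\|_{2,\omega}^2\,e_\beta$ for $\beta\in\Lambda$, whence $R_{g_1}=R_{g_2}$ forces $b^{(1)}_\beta=b^{(2)}_\beta$ on $\Lambda$. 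You should state which of these repairs you are using; with that fixed, your second step (nonvanishing of $\|c_\alpha\|_{A^2(\Omega,\omega)'}$ plus uniqueness of Laurent expansions on a Reinhardt domain) is fine.
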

\begin{proof}
If $R_{g_1}=R_{g_2}$, then $R_{g_1-g_2}=0$. If $K_u$ denotes the reproducing kernel at $u \in \Omega$ then
$$
\begin{aligned}
\left(R_{g_1-g_2} K_u\right)(z) & =\int_{\Omega} K_ u(w)\left(g_1-g_2\right)(z \bar{u}) \omega d V(w) \\
& =\int_{\Omega} K_u(w) \overline{\left(g_1-g_2\right)^*(\bar{z} w)} \omega d V(w) \\
& =\overline{\int_{\Omega} {\left(g_1-g_2\right)^*(\bar{z} w) \overline{K_ u(w)}} \omega d V(w)} \\
& =\overline{\left(g_1-g_2\right)^*(\bar{z} u)}, \hspace{1 mm} z \in \Omega .
\end{aligned}
$$

Therefore $R_{g_1-g_2}=0$, implies that $\left(g_1-g_2\right)^*(\bar{z} u)=0$ for all $z, u \in \Omega$. Hence by the identity theorem, we have  $\left(g_1-g_2\right)^*=0$, which implies $g_1-g_2=0$.\\

Conversely, if $g_1=g_2$, then it is obvious that $R_{g_1}=R_{g_2}$.
\end{proof}



Let $g_1, g_2 \in S$ be induced by $u_1, u_2 \in D^{\prime}\left(\mathbb{T}^n\right)$ with $\hat{u}_1, \hat{u}_2 \in l^{\infty}(\Lambda)$.
We define $g_1+g_2, g_1 \times g_2$ as the functions induced by $u_1+u_2, u_1 * u_2$ respectively. Since $u_1+u_2$, $u_1 * u_2 \in D^{\prime}\left(\mathbb{T}^n\right)$ with $\widehat{\left(u_1+u_2\right)}=\hat{u}_1+\hat{u}_2 \in l^{\infty}(\Lambda)$ and $\widehat{u_1 * u_2}=\hat{u}_1 \cdot \hat{u}_2 \in l^{\infty}(\Lambda)$, therefore $g_1+g_2, g_1 \times g_2 \in S$. Since the operation of convolution is distributive over addition in $D^{\prime}\left(\mathbb{T}^n\right)$, therefore $\times$ is distributive over $+$ in $S$. Thus $(S,+, ., \times)$ forms an algebra.

Let $\|{g}\|_{S}:=\|{R_g}\|$.
Using lemma \ref{Lemma 4.4} we get that, $\|.\|_S$ is a well defined norm on $S$, and $\left\|g_1 g_2\right\|_S=\left\|R_{g_1} R_{g_2}\right\| \leq\left\|R_{g_1}\right\|\left\|R_{g_2}\right\|=\left\|g_1\right\|_S\left\|g_2\right\|_S$.
Let $I: S \rightarrow l^{\infty}(\Lambda)$ be the map given by
$I(g)=\hat{u}$, where $u \in D^{\prime}\left(\mathbb{T}^n\right)$ is such that $u$ induces $g$.

I is bijective by Theorem \ref{Main theorem}. Also,
$$
\begin{aligned}
& I\left(g_1+g_2\right)=\widehat{(u_1+u_2)}=\hat{u}_1+\hat{u}_2=I\left(g_1\right)+I\left(g_2\right) \\
& I\left(g_1 \times g_2\right)=\widehat{u_1 * u_2}=\hat{u}_1 * \hat{u}_2=I\left(g_1\right) I\left(g_2\right) \\
& I\left(\alpha g_1\right)=\alpha \hat{u}_1=\alpha I\left(g_1\right),
\end{aligned}
$$

Now, $I\left(g^*\right)=\hat{\widetilde{u}}=\overline{\hat{u}}=\overline{I(g)}$ for all $g, \in S$, and
$$
\begin{aligned}
\|I(g)\|_{l^{\infty}(\Lambda)} & =\|\hat{u}\|_{l_\infty(\Lambda)} \\
& =\|M_{\hat{u}}\| \\
& =\left\|T^* M_{\hat{u}} T\right\| \\
& =\|R_g\| \\
& =\|g\|_S
\end{aligned}
$$

Let $I: S \rightarrow l^{\infty}(\Lambda)$ be the map given by
$I(g)=\hat{u}$, where $u \in D^{\prime}\left(\mathbb{T}^n\right)$ is such that $u$ induces $g$.

Then we have that $I$ is a $*$-isometric isomorphism.

By the above observations, we have the following result,

\begin{theorem}
$(S, +, ., \times, *, \|.\|_S)$ is a von Neumann Algebra of analytic functions on $\Omega$.

    \end{theorem}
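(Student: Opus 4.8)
The plan is to assemble the claim from the pieces already established, since essentially all the analytic work has been done in the preceding lemmas. The structure is: (i) $S$ is an algebra with an involution $*$ and a norm $\|\cdot\|_S$; (ii) the map $I\colon S\to\ell^\infty(\Lambda)$, $I(g)=\hat u$, is a $*$-isometric isomorphism onto $\ell^\infty(\Lambda)$; (iii) $\ell^\infty(\Lambda)$ is a von Neumann algebra, being $*$-isometrically isomorphic via $a\mapsto M_a$ to $\mathcal M(\ell^2(\Lambda))$, a maximal abelian (hence weak-operator closed) $C^*$-subalgebra of $B(\ell^2(\Lambda))$ by Theorem~\ref{multiplication}(3); and (iv) transporting the weak-operator topology along $T^*M_{(\cdot)}T$ and then along $I^{-1}$ equips $S$ with a topology in which it is a von Neumann algebra. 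Concretely, one identifies $S$ with $S_R=\{R_g\}$ (legitimate by Lemma~\ref{Lemma 4.4}, which says $g\mapsto R_g$ is injective), and $S_R$ with $T^*\mathcal M(\ell^2(\Lambda))T$ via the unitary $T$ and the identity $R=T^*M_{\hat u}T$ from Proposition~(\ref{eq 3.4}).

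First I would record that $g\mapsto R_g$ is a bijection from $S$ onto $S_R$: injectivity is Lemma~\ref{Lemma 4.4}, and surjectivity is the definition of $S_R$ together with Theorem~\ref{Main theorem}, which guarantees every bounded radial operator is an $R_g$. Next I would verify that this bijection is an algebra $*$-isomorphism: additivity and scalar-homogeneity are immediate from the definition of $g_1+g_2$ and $\alpha g_1$; multiplicativity, $R_{g_1\times g_2}=R_{g_1}R_{g_2}$, follows because under $T$ these correspond to $M_{\widehat{u_1*u_2}}=M_{\hat u_1\hat u_2}=M_{\hat u_1}M_{\hat u_2}$ (using Theorem~2.9 on Fourier transforms of convolutions and Theorem~\ref{multiplication}(2)); and the $*$-compatibility $R_{g^*}=(R_g)^*$ is Lemma~4.3. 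Combined with the computation $\|I(g)\|_{\ell^\infty(\Lambda)}=\|\hat u\|_{\ell^\infty(\Lambda)}=\|M_{\hat u}\|=\|T^*M_{\hat u}T\|=\|R_g\|=\|g\|_S$ already displayed, this shows $I=I'\circ(g\mapsto R_g)$, where $I'\colon S_R\to\ell^\infty(\Lambda)$ sends $R_g=T^*M_{\hat u}T\mapsto\hat u$, is a $*$-isometric isomorphism of $S$ onto $\ell^\infty(\Lambda)$. In particular $S$ is a $C^*$-algebra, being $*$-isometrically isomorphic to the $C^*$-algebra $\ell^\infty(\Lambda)$ (completeness of $S$ comes for free from completeness of $\ell^\infty(\Lambda)$ transported back).

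To upgrade from $C^*$-algebra to von Neumann algebra, I would argue that $S_R$, sitting inside $B(A^2(\Omega,\omega))$, is already a von Neumann algebra: it is exactly the set of all radial operators, which by the earlier Theorem is a maximal abelian $C^*$-subalgebra of $B(A^2(\Omega,\omega))$, and a maximal abelian $C^*$-subalgebra of $B(H)$ is automatically weak-operator closed, hence a von Neumann algebra (alternatively, $S_R=T^*\mathcal M(\ell^2(\Lambda))T$ and $\mathcal M(\ell^2(\Lambda))$ is a von Neumann algebra by Theorem~\ref{multiplication}(3) plus the fact that maximal abelian self-adjoint subalgebras are WOT-closed, and conjugation by the unitary $T$ preserves this). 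Then one simply declares the von Neumann algebra structure on $S$ to be the one pulled back through the $*$-isometric isomorphism $g\mapsto R_g$; since this map is a $*$-isomorphism onto the von Neumann algebra $S_R\subseteq B(A^2(\Omega,\omega))$, the quintuple $(S,+,\cdot,\times,*,\|\cdot\|_S)$ is a von Neumann algebra of analytic functions on $\Omega$, as claimed.

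The main obstacle, and the point I would be most careful about, is the precise sense in which $S$ "is" a von Neumann algebra: a von Neumann algebra is by definition a concrete $*$-subalgebra of $B(H)$ closed in the weak (or strong) operator topology, so the statement should really be read as "$S$ carries a $*$-isometric isomorphism onto the von Neumann algebra $S_R$ of radial operators on $A^2(\Omega,\omega)$" — equivalently, $S$ is a $W^*$-algebra (an abstract $C^*$-algebra that is a dual Banach space, namely the dual of $\ell^1(\Lambda)$ via the identification with $\ell^\infty(\Lambda)$), with predual $\ell^1(\Lambda)$ and corresponding weak-$*$ topology. I would make this explicit and cite Sakai's characterization of $W^*$-algebras if needed; modulo that clarification, every ingredient — injectivity of $g\mapsto R_g$, the algebra operations, the involution, the isometry, and the von Neumann property of $S_R$ — has already been supplied above, so no further computation is required.
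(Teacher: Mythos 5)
Your proof is correct and follows essentially the same route as the paper: both rest on the $*$-isometric isomorphism $I\colon S\to \ell^\infty(\Lambda)$ (equivalently, the identification of $S$ with the algebra $S_R$ of radial operators via $g\mapsto R_g=T^*M_{\hat u}T$, injective by Lemma \ref{Lemma 4.4} and surjective by Theorem \ref{Main theorem}) together with the fact that $\ell^\infty(\Lambda)\cong\mathcal M(\ell^2(\Lambda))$ is a maximal abelian, hence weak-operator closed, $C^*$-subalgebra of $B(\ell^2(\Lambda))$. Your explicit discussion of the sense in which the abstract algebra $S$ ``is'' a von Neumann algebra (WOT-closedness of $S_R$, or the predual $\ell^1(\Lambda)$ via Sakai) is a clarification of a point the paper leaves implicit, but it does not change the argument.
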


If $\Omega = \mathbb{D}$ and $\omega =1$, then we denote the corresponding von Neumann algebra $S$ on $\mathbb{D}$ as $S^{A^2}(\mathbb{D})$. The notation will be used in the next section.

\begin{example}
        (1) Let $\omega$ be a non-zero constant weight on $\Omega$. Recall that the Bergman kernel $K_{\Omega} : \Omega \times \Omega \rightarrow \mathbb{C}$ is given by 
        $$
    K_{\Omega}(z, w) = \sum_{\alpha \in \Lambda}\left\|c_\alpha\right\|_{A^2(\Omega, \omega)^{\prime}}^2 \bar{w}^\alpha z^\alpha
    $$

It is known that the series converges uniformly on compact subsets of $\Omega \times \Omega.$ \linebreak Thus, $\sum_{\alpha \in \Lambda}\|c_\alpha\|_{A^2(\Omega, \omega)^{'}}^2 z^\alpha$ defines an analytic function on $\Omega$ and has an analytic extension to $\widetilde{\Omega} := \{z \bar{w} : z, w \in \Omega\}.$
Therefore, all Reinhardt domains are feasible with respect to a constant weight, in particular, the open unit ball, the open unit polydisc, ploy-annulus and the so-called Thullen domains are examples of feasible domains with respect to constant weight.\\
\hfill

(2) Let $\omega$ be a general admissible and multi-radial weight on a pseudoconvex Reinhardt domain $\Omega$. By \cite[Theorem 3.3]{chakrabarti2024projections} the series $\sum_{\alpha \in \Lambda}\left\|c_\alpha\right\|_{A^2(\Omega, \omega)^{\prime}}^2 \bar{w}^\alpha z^\alpha$ converges locally normally on $\Omega \times \Omega$. Thus, $\Omega$ is feasible with respect to $\omega$. In particular, the Hartogs Triangle is a feasible domain with respect to any admissible and multi-radial weight on it.
\end{example}

Thus, the integral representations of radial operators hold true on the above mentioned domains. Also, in particular, the set $S:=\{g : R_g \text{ is radial and bounded} \}$ forms a von Neumann algebra on these domains.
 
 \section{Remarks on the Hardy and the Dirichlet Spaces} \label{SEC: p=5}
 
 Let $\mathscr{D}\left(\mathbb{D}\right)$ denotes the Dirichlet space of the unit disk $\mathbb{D}$ in $\mathbb{C}$. The complete norm on this space is given by
 $$\|f\|_{\mathscr{D}(\mathbb{D})}^2=|f(0)|^2+\int_\mathbb{D}\left|f'(z)\right|^2 d A(z),$$
 for all $f \in \mathscr{D}({\mathbb{D}}),$ where $dA$ denotes the normalized Lebesgue measure on $\mathbb{D}.$

 Let $\mathbb{Z}_{+}$ denotes the set of all non-negative integers. Following the similar line of construction as in the case of the weighted Bergman spaces of Reinhardt domains, one obtains,

\begin{theorem}
    
$R: \mathscr{D}\left(\mathbb{D}\right) \rightarrow \mathscr{D}\left(\mathbb{D}\right)$ is a bounded radial operator if and only if there exists a distribution $u \in D^{\prime}\left(\mathbb{T}\right)$ with $(\hat{u}(m))_{m \in \mathbb{Z}_{+}} \in l^{\infty}\left(\mathbb{Z}_{+}\right)$ which induces the analytic function $g$ on $\mathbb{D}$ such that
$$
R f(z)=\hat{u}(0) f(0)+z \int_{\mathbb{D}} f'(w) g'(z \bar{w}) d A(w) .
$$
for all $f \in \mathscr{D}(\mathbb{D}).$  
\end{theorem}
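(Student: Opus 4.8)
The plan is to mirror Section \ref{SEC:p=3} almost verbatim, replacing the orthogonal system of Laurent monomials by the monomials $\{z^m : m \in \mathbb{Z}_+\}$, which form an orthogonal basis of $\mathscr{D}(\mathbb{D})$. A direct computation using $(z^m)' = m z^{m-1}$ and the normalization $\int_{\mathbb{D}} |z|^{2k}\, dA = (k+1)^{-1}$ gives $\|z^0\|_{\mathscr{D}}^2 = 1$ and $\|z^m\|_{\mathscr{D}}^2 = m$ for $m \geq 1$, so that the coordinate functionals have norms $\|c_0\|_{\mathscr{D}'}^2 = 1$ and $\|c_m\|_{\mathscr{D}'}^2 = 1/m$ for $m\geq 1$. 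First I would introduce the unitary $T : \mathscr{D}(\mathbb{D}) \to l^2(\mathbb{Z}_+)$ sending $f$ to its normalized Taylor coefficients; expanding the Dirichlet inner product $\langle f, z^m\rangle_{\mathscr{D}}$ yields the explicit formulas $(Tf)(0) = f(0)$ and $(Tf)(m) = \sqrt{m}\int_{\mathbb{D}} f'(w)\,\bar{w}^{m-1}\, dA(w)$ for $m \geq 1$, with $T^*\big((b_m)_m\big) = b_0 + \sum_{m\geq 1} b_m z^m/\sqrt m$. Since $V_\lambda$ ($\lambda\in\mathbb{T}$) acts on Taylor coefficients by $c_m \mapsto \lambda^m c_m$, conjugation by $TF$ (with $F$ the restriction of the Fourier transform, $F : L^2_{\mathbb{Z}_+}(\mathbb{T}) \to l^2(\mathbb{Z}_+)$) carries $V_\lambda$ to the translation $\tau_\eta$ with $\lambda = e^{-i\eta}$, which is the one-variable analog of Lemma \ref{sec 3, lem 1}.

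Next, repeating the argument of the proposition in Section \ref{SEC:p=3}: for a bounded radial $R$, the operator $S := F^{-1} T R T^* F$ commutes with all translations on $L^2_{\mathbb{Z}_+}(\mathbb{T})$, hence, by the extension-by-zero device and Theorem \ref{thm 2.12} (with $n=1$), is a Fourier multiplier with symbol $a = (a_m)_{m\in\mathbb{Z}_+} \in l^\infty(\mathbb{Z}_+)$; thus $R = T^* M_a T$. As $l^\infty(\mathbb{Z}_+) \subseteq \mathcal{S}'(\mathbb{Z}_+)$, there is a unique $u \in D'(\mathbb{T})$ with $\hat u = a$, in particular $a_0 = \hat u(0)$. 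Substituting the formulas for $T$, $M_a$, $T^*$ into $Rf = T^* M_a T f$ gives
\[
Rf(z) = \hat u(0) f(0) + \sum_{m\geq 1} a_m z^m \int_{\mathbb{D}} f'(w)\,\bar{w}^{m-1}\, dA(w),
\]
and, interchanging sum and integral (justified by dominated convergence, since $\sum_{m\geq 1}|a_m|\,|z|^{m-1}|w|^{m-1} \leq \|a\|_\infty (1-|z|)^{-1}$ uniformly in $w\in\mathbb{D}$ while $f' \in L^2(\mathbb{D}, dA)$), one obtains $Rf(z) = \hat u(0) f(0) + z\int_{\mathbb{D}} f'(w)\, g'(z\bar w)\, dA(w)$ with $g(\zeta) := \sum_{m\in\mathbb{Z}_+} \|c_m\|_{\mathscr{D}'}^2\, a_m\, \zeta^m = \hat u(0) + \sum_{m\geq 1}\frac{a_m}{m}\zeta^m$, so that $g'(\zeta) = \sum_{m\geq 1} a_m \zeta^{m-1}$. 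Because $\sum_{m\in\mathbb{Z}_+}\|c_m\|_{\mathscr{D}'}^2 z^m = 1 - \log(1-z)$ is analytic on $\mathbb{D}$ and $\widetilde{\mathbb{D}} = \{z\bar w : z,w\in\mathbb{D}\} = \mathbb{D}$, the disk is ``feasible'' in the sense of Section \ref{SEC:p=3} and the apparatus of $K_r$, $S_r$ and of ``$g$ induced by $u$'' transfers line for line; one checks as in Lemma \ref{induce} and Theorem \ref{Main theorem} that $g$ is analytic on $\mathbb{D}$ and induced by $u$ with $\hat u \in l^\infty(\mathbb{Z}_+)$.

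For the converse, given $g$ induced by such a $u$, I would run the same computation backwards to identify $f \mapsto \hat u(0) f(0) + z\int_{\mathbb{D}} f'(w)\, g'(z\bar w)\, dA(w)$ with $T^* M_{\hat u} T$, which is bounded since $M_{\hat u}$ is; radiality $RV_\lambda = V_\lambda R$ then follows either from $R = T^* M_{\hat u} T$ together with $T V_\lambda T^* = M_{(\lambda^m)_m}$ (a diagonal operator, hence commuting with $M_{\hat u}$), or directly from the change of variables $w\mapsto\lambda w$ (real Jacobian $1$) combined with $(V_\lambda f)'(w) = \lambda f'(\lambda w)$, the extra factor $\lambda$ merging with the leading $z$ to reproduce $V_\lambda Rf(z) = (Rf)(\lambda z)$. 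The routine parts are the monomial-norm computation and the Fubini/dominated-convergence bookkeeping; the step requiring genuine care — and the main obstacle — is confirming that the feasibility machinery of Section \ref{SEC:p=3}, and in particular the passage from $g$ to $g'$ that produces the factor $z$ and the kernel $g'(z\bar w)$, transfers correctly to the Dirichlet space, which it does precisely because $\sum_m \|c_m\|_{\mathscr{D}'}^2 z^m$ converges on all of $\mathbb{D}$ and $\widetilde{\mathbb{D}} = \mathbb{D}$.
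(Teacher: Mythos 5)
Your proposal is correct and follows essentially the same route as the paper, which itself only states that the Dirichlet-space result is obtained ``following the similar line of construction'' of Section \ref{SEC:p=3}; you carry out exactly that transfer, and your computations ($\|z^m\|_{\mathscr{D}}^2=m$, $(Tf)(m)=\sqrt{m}\int_{\mathbb{D}}f'(w)\bar w^{m-1}\,dA(w)$, $g'(\zeta)=\sum_{m\ge 1}a_m\zeta^{m-1}$, and the $\lambda$-bookkeeping in the radiality check) are all accurate. Nothing further is needed.
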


Due to the dependency of $R$ on the Dirichlet space and the analytic function $g$, let us denote the operator $R$ as $R^{\mathscr{D}}_{g}.$ Let $S^{\mathscr{D}}(\mathbb{D}) :=\{ g \in \mathcal{O}(\mathbb{D}): R^{\mathscr{D}}_{g} \text{ is bounded} \}.$ Then $S^{\mathscr{D}}$ is a von Neumann algebra on $\mathbb{D}$, where the algebraic operations and the norm on $S^{\mathscr{D}}$ are defined similiarly as in the previous section.

Let $H^2(\mathbb{D})$ denotes the Hardy-Hilbert space of $\mathbb{D}=\{z \in \mathbb{C}:|z|<1\}$. Then it is known that, 
$$\|f\|_{H^2}^2=|f(0)|^2+\int_\mathbb{D}\left|f^{\prime}(z)\right|^2 \log \frac{1}{|z|^2} d A(z),$$
for all $ f \in H^2(\mathbb{D}).$
Hence, proceeding similarly as the proof of Theorem \ref{Main theorem}, we have the following result for the Hardy-Hilbert space of the unit disk:

\begin{theorem}
    $\tilde{R}: {H}^2(\mathbb{D}) \rightarrow {H}^2(\mathbb{D})$ is a bounded radial operator if and only if there exists a distribution $u \in D^{\prime}(\mathbb{T})$ with $(\hat{u}(m))_{m \in \mathbb{Z}_{+}} \in l^{\infty}\left(\mathbb{Z}_{+}\right)$ which induces the analytic function $g$ on $\mathbb{D}$ such that
    
    $$
    \tilde{R} f(z)=\hat{u}(0) f(0)+z \int_{\mathbb{D}} f^{\prime}(w) g^{\prime}(z \bar{w}) \log \frac{1}{|z|^2} d A(w)
    $$
    for all $f \in H^2(\mathbb{D})$.
\end{theorem}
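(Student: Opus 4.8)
\emph{Proof proposal.} The plan is to transcribe the argument of Theorem~\ref{Main theorem}, with $\mathbb{D}$ in place of $\Omega$, the index set $\mathbb{Z}_{+}$ in place of $\Lambda(\Omega,\omega,2)$, and the weighted volume measure replaced by the Littlewood--Paley weight $\log\frac{1}{|w|^{2}}\,dA(w)$ acting on first derivatives. The organizing observation is that $f\mapsto(f(0),f')$ is an isometry of $H^{2}(\mathbb{D})$ onto $\mathbb{C}\oplus A^{2}\!\big(\mathbb{D},\log\tfrac{1}{|w|^{2}}\,dA\big)$, so the monomials $\{z^{m}:m\in\mathbb{Z}_{+}\}$ form an orthogonal basis of $H^{2}(\mathbb{D})$; the elementary identity $\int_{\mathbb{D}}|w|^{2j}\log\frac{1}{|w|^{2}}\,dA(w)=\frac{1}{(j+1)^{2}}$ gives $\|z^{m}\|_{H^{2}}=1$ for every $m$, and the same identity, in the form $\int_{\mathbb{D}}w^{k-1}\overline{w}^{\,m-1}\log\frac{1}{|w|^{2}}\,dA(w)=\delta_{km}\,k^{-2}$, is exactly what will produce the integral kernel below. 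Following Section~\ref{SEC:p=3}, I would introduce the unitary $T\colon H^{2}(\mathbb{D})\to\ell^{2}(\mathbb{Z}_{+})$ sending $f$ to its sequence of normalized Taylor coefficients, with adjoint $T^{*}$.

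First I would establish the analogue of Lemma~\ref{sec 3, lem 1}: if $F$ denotes the Fourier transform onto the Hardy subspace $L^{2}_{\mathbb{Z}_{+}}(\mathbb{T})$, then $T^{*}F\tau_{\eta}=V_{\xi}T^{*}F$ with $\xi=e^{-i\eta}$, because $V_{\xi}$ acts diagonally on monomials with eigenvalues $\xi^{m}$ just as $\tau_{\eta}$ does on the exponentials $e^{im(\cdot)}$. Consequently, if $R$ is a bounded radial operator then $F^{-1}TRT^{*}F$ commutes with every translation $\tau_{\eta}$ on $L^{2}_{\mathbb{Z}_{+}}(\mathbb{T})$; the orthogonal-complement extension trick used to prove the multiplier lemma in Section~\ref{SEC:p=3} carries over verbatim with $\Lambda=\mathbb{Z}_{+}$ and yields a sequence $a=(a_{m})_{m\in\mathbb{Z}_{+}}\in\ell^{\infty}(\mathbb{Z}_{+})$ with $R=T^{*}M_{a}T$. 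Since $\ell^{\infty}(\mathbb{Z}_{+})\subseteq\mathcal{S}'(\mathbb{Z})$, there is a unique $u\in D'(\mathbb{T})$ with $\widehat{u}=a$ (extended by $0$ on the negative integers), and since $\widehat{u}\in\ell^{\infty}$ the distribution $u$ induces an analytic function $g$ on $\mathbb{D}$ by the analogue of Lemma~\ref{induce}; explicitly $g$ has Taylor coefficients $a_{m}/\|z^{m}\|_{H^{2}}^{2}$, here simply $a_{m}$.

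The computational heart is the expansion $Rf(z)=(T^{*}M_{a}Tf)(z)=\sum_{m\geq0}a_{m}\widehat{f}(m)\,z^{m}$, which I would match to the claimed formula by writing $f'(w)=\sum_{k\geq1}k\widehat{f}(k)w^{k-1}$ and $g'(t)=\sum_{m\geq1}m\,a_{m}t^{m-1}$, multiplying, and integrating term by term against $\log\frac{1}{|w|^{2}}\,dA(w)$ (the interchange of sum and integral justified by dominated convergence exactly as in Theorem~\ref{Main theorem}). The orthogonality relation $\int_{\mathbb{D}}w^{k-1}\overline{w}^{\,m-1}\log\frac{1}{|w|^{2}}\,dA(w)=\delta_{km}k^{-2}$ collapses the double sum to $\sum_{k\geq1}a_{k}\widehat{f}(k)z^{k-1}$; multiplying by $z$ and adding back the constant term $\widehat{u}(0)f(0)=a_{0}\widehat{f}(0)$ recovers $\sum_{m\geq0}a_{m}\widehat{f}(m)z^{m}=Rf(z)$, which proves the ``only if'' direction. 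For the converse I would start from an analytic $g$ induced by such a $u$, run the previous computation backwards to identify the integral operator with $T^{*}M_{\widehat{u}}T$, deduce boundedness from $\widehat{u}\in\ell^{\infty}(\mathbb{Z}_{+})$, and verify radiality either by the change of variables $w\mapsto\lambda w$ in the integral (whose real Jacobian equals $1$) or, more quickly, by noting that $T^{*}M_{\widehat{u}}T$ is diagonal in the monomial basis and hence commutes with each $V_{\lambda}=\mathrm{diag}(\lambda^{m})$.

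The main obstacle is organizational rather than conceptual: one must check that the one-sided index set $\mathbb{Z}_{+}$ causes no difficulty in the translation/multiplier correspondence (it does not, thanks to the extension-by-zero argument), and one must notice that the Littlewood--Paley weight $\log\frac{1}{|w|^{2}}$ has the precise property that its radial moments are the reciprocals of the squared monomial norms---this numerical coincidence is exactly what forces the $f'$--$g'$ shape of the kernel and is the only genuine point of departure from treating $H^{2}(\mathbb{D})$ as an honest weighted Bergman space of $\mathbb{D}$. The Dirichlet-space statement is obtained by the identical scheme, the only change being that there $\|z^{m}\|^{2}=m$ for $m\geq1$, so no weight on $dA$ is needed and $g$ acquires Taylor coefficients $a_{m}/m$.
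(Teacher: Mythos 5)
Your proposal is correct and follows precisely the route the paper itself intends (the paper offers no separate argument, saying only that one proceeds as in Theorem~\ref{Main theorem}): conjugate by the Taylor-coefficient unitary, apply the translation--multiplier lemma with $\Lambda=\mathbb{Z}_{+}$ via the extension-by-zero trick, and use the Littlewood--Paley moment identity $\int_{\mathbb{D}}|w|^{2j}\log\frac{1}{|w|^{2}}\,dA(w)=(j+1)^{-2}$ to collapse the $f'$--$g'$ kernel to the diagonal operator $\sum_{m}a_{m}\hat f(m)z^{m}$. You have also, correctly, read the weight in the displayed formula as $\log\frac{1}{|w|^{2}}$ rather than the $\log\frac{1}{|z|^{2}}$ printed in the statement, which must be a typo since the latter is constant in the integration variable.
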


Let us denote $\tilde{R}$ as $R^{H^2}_g$, and $S^{H^2}(\mathbb{D}) := \{g \in \mathcal{O}(\mathbb{D}): R^{H^2}_g \in B(H^2(\mathbb{D}))\}.$ Then $S^{H^2}$ is also a von Neumann algebra of analytic functions on $\mathbb{D}$, where the algebraic operations and the norm on $S^{H^2}$ are similar as in the previous section.

Recall that the \textit{disc algebra} on $\mathbb{D}$, denoted by $A(\mathbb{D})$, is the collection of all continuous functions on the closure of $\mathbb{D}$ which are analytic on $\mathbb{D}$, and let $H^\infty (\mathbb{D})$ denotes the set of all bounded analytic functions on $\mathbb{D}$. Then we have the following chain of inclusion relations.

$$ A(\mathbb{D}) \subseteq H^\infty (\mathbb{D}) \subseteq H^2(\mathbb{D}) \subseteq S^{H^2}(\mathbb{D}),$$

$$H^\infty (\mathbb{D}) \subseteq A^2(\mathbb{D}) \subseteq S^{A^2}(\mathbb{D}),$$

$$ H^\infty (\mathbb{D}) \subseteq \mathscr{D}(\mathbb{D}) \subseteq S^{\mathscr{D}}(\mathbb{D}),$$

and 

$$ S^{\mathscr{D}}(\mathbb{D}) \subseteq S^{H^2}(\mathbb{D}) \subseteq S^{A^2}(\mathbb{D}).$$

Note that the each of the above inclusions is strict, and each of the algebras in the last chain of inclusions is isometrically isomorphic to $l^\infty(\mathbb{Z}_{+}).$

	\section*{Acknowledgments}
	
	 I am indebted to my supervisor Dr. Shibananda Biswas for his unwavering support, guidance and useful suggestions that improved the quality of this manuscript.


	\bibliographystyle{abbrv}
    \bibliography{Radial} 
	
\end{document}